\definecolor{aleacolor}{rgb}{0.16,0.59,0.78}
\renewcommand{\cite}{\citet}
\theoremstyle{plain}
\newtheorem*{thm*}{Theorem} %add by author
\newtheorem{theorem}{Theorem}[section]                                          
\newtheorem{lemma}[theorem]{Lemma}
\newtheorem{corollary}[theorem]{Corollary}
\theoremstyle{definition}
\theoremstyle{remark}
\newtheorem{remark}[theorem]{Remark}
\makeatletter \@addtoreset{equation}{section} \makeatother
\newcommand{\aleaIndex}[1]{\href{http://alea.impa.br/english/index_v#1.htm}{\bf #1}}
\begin{document}

\title{Nonfixation for Activated Random Walks}
\begin{abstract}
We consider the activated random walk (ARW) model where particles
follow the path of a general Markov process on a general graph. We
prove that ARW, regardless of sleep rate, dominates a simpler process,
multiple source internal aggregation (MSIA), and use this to formulate
a deterministic sufficient condition on initial occupations for nonfixation
of ARW and similar variants. In particular, on bounded degree graphs,
initial occupation density greater than one almost surely implies
nonfixation, where independence requirements are weakened to ergodic
in the case of Euclidean lattices. Finally, we prove the critical
density for the infinite sleep rate ARW is positive for all dimensions. 
\end{abstract}

\author{Eric Shellef}

\address{The Weizmann Institute of Science\\
76100 Rehovot\\
Israel}

\email{shellef@gmail.com}

\keywords{Activated Random Walk, nonfixation}

\subjclass[2000]{60K35}

\date{November 4, 2009; accepted May 13, 2010}

\maketitle

\section{Introduction}

Given a graph, the activated random walks (ARW) process is an interacting
system of \emph{active} and \emph{sleeping} particles on the vertices
of the graph. Active particles perform independent, rate one, random
walks, while sleeping particles stay put. A sleeping particle remains
sleeping as long as it is alone at a vertex. Active particles fall
asleep independently at a rate $\lambda>0$. However, if two or more
particles occupy the same vertex, they will all be active. Thus a
sleeping particle at a vertex $v$ will be activated by an active
particle that jumps to $v$, and an active particle will never fall
asleep if it is not alone at a vertex. All particles begin as active
from some initial occupation state at time $t=0$. We call a vertex
\emph{fixating }if\emph{ }for some finite time onward, no active particle
visits the vertex. A vertex is \emph{nonfixating} if no such time
exists. A graph is nonfixating for an initial occupation if all vertices
are nonfixating almost surely. Otherwise, it is fixating for that
initial occupation.

The main theme explored is the connection between initial particle
placement and nonfixation. In particular, if the initial occupation
of each vertex is an iid r.v. $X$ with mean $\mu$, is there a phase
transition from fixation to nonfixation as we increase $\mu$? Several
papers treating this question used $\mathbb{Z}^{d}$ as a setting
and took $X$ to have a Poisson distribution with mean $\mu$. An
initial result in this framework can be found in \cite{kesten2006phase}
where a model similar to ARW, in which sleeping particles are not
static, was studied in detail. In Remark 5 of the introduction of
\cite{kesten2006phase}, ARW itself with {}``sleepier'' initial
conditions is treated, and a general upper bound proven on the critical
density for global fixation. Specifically, it is shown that for some
large initial density $\mu(d)$, there is a positive probability for
a system starting with all particles sleeping except for a single
active one to never reach a state where all particles are sleeping,
regardless of how high the sleep rate is. In \cite{rolla2009absorbing},
ARW is studied by Rolla and Sidoravicius, again on $\mathbb{Z}^{d}$
with $X$ Poisson. A monotonicity theorem proved in the same paper,
implies that there is at most one phase transition in the density
$\mu$ for the ARW process on a graph. An explicit upper bound and
a non-trivial lower bound on the critical phase transition density
in one dimension are given. In \cite{amir2009fixation}, Amir and
Gurel-Gurevich show that on transitive graphs satisfying the unimodularity
condition, if each vertex is fixating, then eventually all particles
fall asleep. They proceed to show that a.s. there is no fixation on
such graphs for initial occupation density larger than one (compare
with Theorem \ref{thm:uncor_mean_gt_1}).

In this paper we consider ARW in a general form, allowing the particles
to follow the path of a general Markov process on the graph. Using
a dominating coupling of a simpler finite process (Lemma \ref{lem:coupling_with_MSIA}),
we formulate in Theorem \ref{thm:Delta_deviation} a deterministic
sufficient condition on initial occupations for nonfixation, regardless
of sleep rate $\lambda$. We use this condition to prove nonfixation
in several scenarios. In Theorem \ref{thm:uncor_mean_gt_1}, we prove
that on all bounded degree graphs, an initial occupation density greater
than one almost surely implies nonfixation, answering a question posed
by Rolla and Sidoravicius (\citealp{rolla2008generalized,rolla2009absorbing}).
The iid assumption on the initial occupation of each vertex can be
significantly weakened, and in the special case of $\mathbb{Z}^{d}$,
we show that ergodic initial occupation suffices (Theorem \ref{thm:ergodic}).
In the next section, we prove the critical density for the infinite
sleep rate ARW is positive for all dimensions. In the appendix we
prove a general lemma for simple random walks on a network that may
be of independent interest. The lemma gives a lower bound for the
average exit time from a set starting at some vertex using the average
number of visits to that vertex before exit.

Note that the nonfixating condition (and thus the results of section
\ref{sec:Nonfixation-for-Random}) holds for a large class of ARW
variants, since the only assumption used is that if a vertex is occupied
by more than one particle, there must be a time when one of the particles
hops from that vertex.

\section{Notation and Nonfixation Criterion}

Let $S$ be a countable graph, let $\eta_{0}:S\to\mathbb{N}\cup\left\{ 0\right\} $
be an initial occupation for a ARW process on $S$, and let $P(\cdot,\cdot)$
be a transition kernel for a Markov process on $S$. Each particle
is assigned an independent rate one Poisson clock, which together
with $P$, determines the evolution of the particle in the ARW when
it is active. We denote the triplet $\left(S,\eta_{0},P\right)$ by
$\eta$, and write $\mathbb{P}_{(\lambda)}^{\eta}\left[\cdot\right]$
for the law of the ARW process with kernel $P$ on $S$ with initial
occupation $\eta_{0}$ and $\lambda$ a positive or infinite sleep
rate. We write $\lambda=\infty$ for the process where a particle
is sleeping if and only if it is alone at a vertex. 

The question of existence of the process, and ruling out the possibility
that an infinite number of particles reach some vertex in a finite
time is not treated here. See \cite{andjel1982invariant} for conditions
and a construction which can easily be adapted to the ARW process.

Fix a vertex $x\in S$. For an integer $r$, $\mathcal{A}_{r}=\mathcal{A}_{r}(x)$
is the event that $x$ is visited by an active particle at least $r$
distinct times.

We call $\eta$ \emph{nonfixating} at $x$ if $\mathbb{P}_{(\lambda)}^{\eta}\left[\mathcal{A}_{r}(x)\right]=1$
for every $r\in\mathbb{N}$ and every positive or infinite $\lambda$.
We say $\eta$ is nonfixating if it is nonfixating at all its vertices.
From here on, we omit $\lambda$ from the notation as the results
hold regardless of $\lambda$.

Given a graph $S$ and kernel $P$, we define the following multiple
source internal aggregation (MSIA) process. We begin with a finite
number of indistinguishable {}``explorers'' occupying different
vertices of $S$ according to an initial occupation $\mbox{\ensuremath{\gamma_{0}:S\to\mathbb{N}\cup\left\{ 0\right\} }}$.
If a vertex contains more than one explorer, then one of the explorers
at that vertex begins a walk according to kernel $P$ until reaching
an unoccupied vertex, where he remains forever. Assume some fixed
ordering of the vertices of $S$. The explorers perform these walks
one at a time by this order. This continues until each vertex has
at most one explorer. The law of the random subset of occupied vertices
at this stage is not dependent on the order in which explorers are
chosen, as was shown in \cite{diaconis1991growth}, where this growth
model is described. Moreover, the below domination statement is true
for any fixed order. Thus when we discuss an MSIA process we implicitly
assume some arbitrary ordering of the vertices.

We identify the triplet $\left(S,\gamma_{0},P\right)$ by $\gamma$
and write $\mathcal{P}^{\gamma}\left[\cdot\right]$ for the law of
the MSIA process on $S$ with kernel $P$ and initial occupation $\gamma_{0}$.
Assume we have two triplets $\eta=\left(S,\eta_{0},P\right)$ and
$\gamma=\left(S,\gamma_{0},P\right)$ such that $\sum_{x\in S}\gamma_{0}(x)$
is finite and $\gamma_{0}\le\eta_{0}$, i.e. there are initially no
less particles than explorers for each vertex in $S$. In Lemma \ref{lem:coupling_with_MSIA},
which uses ideas found in \cite{lawler1992internal,diaconis1991growth,rolla2009absorbing},
we show that the number of visits by an explorer to a vertex $x$
in $\gamma$-MSIA is stochastically dominated by the number of visits
of particles to $x$ in $\eta$-ARW.

The advantage of this coupling technique is that it assumes only a
very basic property of the ARW and is thus valid for many ARW variations,
see Remark \ref{rem:Sole_ARW_assumption}.

In the context of the MSIA process, we write $\mathcal{A}_{r}(x)$
for the event that vertex $x$ is visited by at least $r$ explorers.
\begin{lemma}
\label{lem:coupling_with_MSIA}Let $\eta=\left(S,\eta_{0},P\right)$
define an ARW, let $\gamma=\left(S,\gamma_{0},P\right)$ with\break ${\sum_{x\in S}\gamma_{0}(x)<\infty}$
define an MSIA, and assume $\gamma_{0}\le\eta_{0}$. Then for any
integer $r$ and $x\in S$,
\end{lemma}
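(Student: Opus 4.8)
The plan is to build a single probability space carrying both processes and to run the $\gamma$-MSIA \emph{inside} a realization of the $\eta$-ARW, so that every explorer move coincides with an ARW particle move executed by the same instruction. Following the Diaconis--Fulton framework (\cite{diaconis1991growth,lawler1992internal,rolla2009absorbing}), attach to each vertex $v\in S$ an i.i.d.\ sequence $\xi_v^1,\xi_v^2,\dots$ with $\xi_v^k\sim P(v,\cdot)$, recording the destination used by the $k$-th time a particle (or explorer) departs from $v$, together with the independent extra randomness governing the ARW's sleep events. A \emph{move toppling} of $v$ sends one particle (explorer) from $v$ to $\xi_v^k$, where $k-1$ is the number of previous departures from $v$; for the ARW there is additionally a \emph{sleep toppling}, available only when $v$ holds exactly one particle. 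The $\gamma$-MSIA is the stabilization of $\gamma_0$ using move topplings at sites carrying $\ge 2$ explorers, and by the abelian property of \cite{diaconis1991growth} the number of departures $u^\gamma(v)$ from each $v$, hence the law of the number of explorer visits to each vertex, is independent of the toppling order. Since $\sum_x\gamma_0(x)<\infty$, this stabilization terminates after a finite toppling sequence $v_1,\dots,v_m$.

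The key step is to replay $v_1,\dots,v_m$ as the opening topplings of an $\eta$-ARW evolution, maintaining the invariant that throughout this phase the current ARW occupation dominates the current MSIA occupation vertex by vertex. The invariant holds at time $0$ since $\gamma_0\le\eta_0$, and it is preserved at each step: when the MSIA executes a move toppling at $v_i$ it has $\ge2$ explorers there, so by the invariant the ARW has $\ge2$ particles at $v_i$, all active by the ARW's rule, making a move toppling of $v_i$ a legal ARW step; performing it in \emph{both} processes with the same instruction $\xi_{v_i}^{k}$ alters the two occupations identically, so domination persists. Hence the finite MSIA sequence is a legal initial segment of an $\eta$-ARW trajectory using exactly $u^\gamma(v)$ instructions at each $v$; complete it to a full ARW trajectory (by the abelian property of the ARW, \cite{rolla2009absorbing}, this is a valid $\eta$-ARW realization). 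Because visit counts only increase in time and each MSIA move into $x$ is in this coupling an ARW move into $x$, the number of distinct visits of particles to $x$ in the ARW is at least the number of explorer visits to $x$ in the $\gamma$-MSIA, with the initial-occupation terms comparing correctly since $\gamma_0(x)\le\eta_0(x)$. Thus $\mathcal A_r(x)$ for the MSIA implies $\mathcal A_r(x)$ for the ARW on this coupling, and taking probabilities yields $\mathcal P^\gamma[\mathcal A_r(x)]\le\mathbb P^\eta[\mathcal A_r(x)]$.

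The main obstacle is making the instruction-stack/abelian framework rigorous in this generality --- continuous time, a general kernel $P$, an infinite graph, and an ARW that need not stabilize --- and, within it, ruling out interference from the ARW's sleep topplings. This is precisely why one should schedule all of the MSIA's move topplings \emph{first}: since a site holding $\ge 2$ particles is forced to eventually emit one (the only property of the ARW used, cf.\ Remark \ref{rem:Sole_ARW_assumption}), no sleep toppling is needed or used during the opening phase, so the awkward possibility of a lone active particle leaving a site never threatens the domination invariant. The remaining ingredients --- well-definedness and order-independence of the two stabilization procedures, the passage between the continuous-time ARW and its toppling representation, and the bookkeeping translating ``move topplings into $x$'' into ``distinct visits to $x$'' --- are exactly the inputs provided by \cite{andjel1982invariant,diaconis1991growth,lawler1992internal,rolla2009absorbing}.
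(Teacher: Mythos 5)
Your setup --- shared instruction stacks $\xi_v^1,\xi_v^2,\dots$ with $\xi_v^k\sim P(v,\cdot)$, used by the $k$-th departure from $v$ in both processes --- is essentially the same coupling the paper uses, and your observation that the MSIA only ever topples sites holding at least two explorers (hence, by domination, at least two ARW particles, so sleep is irrelevant there) is the right one. The gap is in the transfer step: you declare that the finite MSIA toppling sequence can be run as the \emph{opening} segment of an $\eta$-ARW evolution and then ``completed to a full ARW trajectory (by the abelian property of the ARW, this is a valid $\mathbb{P}^{\eta}$ realization).'' That assertion is not available. The ARW here is a continuous-time process whose order of jump and sleep events is dictated by the Poisson clocks; prepending a toppling sequence of your choosing produces a different process, not a realization of $\mathbb{P}^{\eta}$. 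The abelian-type results you cite do not cover this: Diaconis--Fulton abelianness applies to the deterministic aggregation (your MSIA step is fine), and the Rolla--Sidoravicius site-wise results (abelianness of the odometer, least action principle) concern \emph{stabilizing} sequences in finite volume, whereas the ARW in this lemma runs on a general graph with a general kernel and need not fixate --- indeed the lemma exists precisely to prove nonfixation, so one cannot argue through stabilization. What actually has to be shown is the a.s.\ domination, in the shared-stack coupling, of the number of move instructions the true dynamics uses at each site (equivalently, of particle visits to $x$) over the number used by the MSIA; that domination is the real content of Lemma \ref{lem:coupling_with_MSIA}, and it does not follow from invoking abelianness.

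The paper closes exactly this gap without any reordering of the ARW: it reads the MSIA off the \emph{realized} ARW trajectory, letting the $j$-th explorer departure from $v$ follow the destination of the ARW's $j$-th departure from $v$, and then checks by a counting argument that enough ARW departures always exist. The key inequality is $\alpha(x)\ge|m(x)|+\eta_0(x)-1$ (departures from $x$ versus arrivals plus initial particles), which holds a.s.\ because a site with two or more particles must eventually emit one --- the sole property in Remark \ref{rem:Sole_ARW_assumption}; combined with $\gamma_0\le\eta_0$ and the fact that each explorer move is matched to a distinct particle jump, this gives $\beta_T(x)\le\alpha(x)$, so the embedded MSIA is well defined and every explorer visit to $x$ corresponds to a distinct particle visit. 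If you want to salvage your route, you would need to prove a statement of the form ``any legal sequence of topplings at sites carrying at least two particles is, site by site, outnumbered by the departures of the true dynamics,'' and the natural proof of that is precisely the counting argument above, not an appeal to abelianness.
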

\[
\mathcal{P}^{\gamma}\left[\mathcal{A}_{r}(x)\right]\le\mathbb{P}^{\eta}\left[\mathcal{A}_{r}(x)\right].\]

\begin{proof}
We look at the ARW defined by $\eta$. Recall that regardless of $\lambda$
(the rate by which particles fall asleep in the ARW process), if two
particles or more occupy the same vertex, they will all be active
almost surely. The general consequence is as follows. Let $x$ be
a vertex that initially has $\eta_{0}(x)$ particles on it, and let
$m(x)$ be the set of distinct times at which a particle hops \emph{to}
$x$. Writing $\alpha(x)$ for the number of distinct times at which
a particle hops \emph{from} $x$, we have that with probability one,
for any $x$, $\alpha(x)\ge\left|m(x)\right|+\eta_{0}(x)-1$. For
each $x\in S$ label the active particles that will hop from $x$
by the time order of the hop, where the particle to hop first is labeled
$1$, the second to hop $2$ and so on. We now implement the $\gamma$-MSIA
as a marginal of the $\eta$-ARW in a way that each move of each explorer
is paired to a unique jump of a distinct particle. Essentially, when
an explorer is at a vertex $v$ we assign to him the first particle
that jumped from $v$ that wasn't previously assigned to him or an
explorer before him. The explorer then follows that particle. The
formal proof is below.

Let $\gamma_{i}(\cdot)$ where $i\in\mathbb{N}\cup\left\{ 0\right\} $,
denote the number of explorers at a vertex after the $i$'th explorer
completed his exploration. Next, let the discrete time variable $t$
start at $0$ and count the moves of the explorers, accumulating from
one explorer to the next. So if the first explorer made $\tau$ moves
before stopping, the first move of the second explorer is at time
$\tau$. Let $\beta_{t}(x)$ be a counter for the number of distinct
times an explorer has jumped from $x$ up to time $t$. Thus $\beta_{0}(x)=0$
for all $x$. We now describe the algorithm that chooses the path
of the $\left(i+1\right)$'th explorer starting at a vertex $x$ with
$\gamma_{i}(x)>1$ until reaching a vertex $y$ with $\gamma_{i}(y)=0$.
For time $t$ assume the current explorer is at a vertex $v$ with
$\gamma_{i}(v)>0$. Then for all $x\in S$ set $\beta_{t+1}(x)=\beta_{t}(x)+\mathbf{1}_{\left\{ x=v\right\} }$,
and move the explorer to the vertex to which the particle labeled
$\beta_{t+1}(v)$ jumped, assuming for now that such a particle exists.
Continue thus until the explorer reaches a vertex $y$ with $\gamma_{i}(y)=0$.
Set $\gamma_{i+1}(x)=\gamma_{i}(x)-1$, $\gamma_{i+1}(y)=1$ and $\gamma_{i+1}(v)=\gamma_{i}(v)$
for $v\notin\left\{ x,y\right\} $.

It is clear that each explorer move is paired to a unique jump since
we increase $\beta_{t}(x)$ at every move from $x$. It is left to
show that at the end of the process after $T$ time steps $\beta_{T}(x)\le\alpha(x)$
for all $x$ and that the MSIA is well defined. $\beta_{t}(x)$ is
incremented is two cases. In the first case, an explorer has just
hopped to $x$ from another vertex (at time $t-1$). This implies
there is a unique time in $m(x)$ corresponding to this explorer.
The second case is that the explorer began his walk at $x$, which
happens $\gamma_{0}(x)-1$ times. Since $\alpha(x)\ge\left|m(x)\right|+\eta_{0}(x)-1$
and $\eta_{0}\ge\gamma_{0}$ we are done.\end{proof}
\begin{remark}
\label{rem:Sole_ARW_assumption}The only assumption we used from the
ARW is that if a vertex is occupied by more than one particle, there
must be a time when one of the particles hops from that vertex.
\end{remark}
We introduce some more notation used in the paper. For a set $A\subset S$,
let $\eta_{0}|_{A}:S\to\mathbb{N}\cup\left\{ 0\right\} $ be equal
to $\eta_{0}$ on the set $A$ and $0$ outside of it. 

When $P$ is implicit, let $X(t)$ denote the discrete time Markov
chain according to kernel $P$.

For a set $A\subset S$, write $\tau_{A}=\left\{ \inf t\ge0:X(t)\in A\right\} $
for the random time it takes the walk to hit $A$. If $A$ contains
a single vertex $x$, write $\tau_{x}$ for $\tau_{\left\{ x\right\} }$.

Let $\left\{ A_{m}\right\} _{m\in\mathbb{N}}$ be a rising sequence
of sets containing $\mathbf{0}$. When there is no ambiguity, we write
$\tau_{m}$ for $\tau_{A_{m}^{c}}$ - the first exit time of $A_{m}$,
and $\eta_{m}$ for $\eta_{0}|_{A_{m}}$.

Write $p_{x}^{m}=P_{x}\left[\tau_{\mathbf{0}}<\tau_{m}\right]$ for
the probability a random walk on $S$ according to the kernel $P$
and starting at $x\in A_{m}$ hits $\mathbf{0}$ before exiting $A_{m}$.
Note we use the same $P$ for the kernel and the probability on $X(t)$. 

Let $\Lambda_{m}=\sum_{x\in A_{m}}p_{x}^{m}$ and let $\Omega_{m}=\sum\limits _{x\in A_{m}}\eta_{0}(x)p_{x}^{m}$.
Write $\Delta_{m}=\Omega_{m}-\Lambda_{m}$ for the difference. 
\begin{theorem}
\label{thm:Delta_deviation}Given a triplet $\eta=\left(S,\eta_{0},P\right)$,
$\eta$ is nonfixating at \textbf{$\mathbf{0}\in S$} if there exists
a sequence of sets $\left\{ A_{m}\right\} _{m\in\mathbb{N}}$ containing
$\mathbf{0}$ such that\textup{ \begin{equation}
\lim_{m\to\infty}\frac{\Delta_{m}}{\Lambda_{m}^{1/2}}=+\infty.\label{eq:deviation_condition}\end{equation}
}\end{theorem}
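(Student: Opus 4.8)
The plan is to reduce the statement to the MSIA via Lemma~\ref{lem:coupling_with_MSIA} and then run a second--moment argument. We may assume each $A_m$ is finite (otherwise exhaust it by finite subsets and take a supremum in what follows). Put $\gamma_0^{(m)}:=\eta_0|_{A_m}$; this is a finite occupation with $\gamma_0^{(m)}\le\eta_0$, so Lemma~\ref{lem:coupling_with_MSIA} yields
\[
\mathbb{P}^{\eta}[\mathcal{A}_{r}(\mathbf{0})]\ \ge\ \mathcal{P}^{\gamma^{(m)}}[\mathcal{A}_{r}(\mathbf{0})]\qquad (m,r\in\mathbb{N}).
\]
Hence it suffices to show that for each fixed $r$ the number $N_m$ of explorers whose trajectory in the $\gamma^{(m)}$--MSIA meets $\mathbf{0}$ satisfies $\mathcal{P}^{\gamma^{(m)}}[N_m\ge r]\to1$ as $m\to\infty$.

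To analyse $N_m$ I would run the MSIA one explorer at a time in a fixed order, so that the $i$--th explorer always begins from a deterministic vertex $x_i$ (in the MSIA an explorer only moves while it shares its vertex, hence starts from its initial position). Each toppling is driven by a fresh $P$--step read off an independent stack attached to the current vertex, and, in addition, explorer $i$ is equipped with a \emph{private} independent stack governing a phantom continuation of its walk beyond the time $\sigma_i$ at which it settles. In this way explorer $i$ is coupled to a genuine $P$--walk $\xi_i$ started from $x_i$; the walks $\xi_1,\xi_2,\dots$ are independent; and the explorer's actual trajectory is $\xi_i$ stopped at $\sigma_i$. Let $\tau_m^{(i)}$ be the first exit of $\xi_i$ from $A_m$ and $\tau_{\mathbf{0}}^{(i)}$ its first visit to $\mathbf{0}$, and set $\hat{N}_m:=\#\{\,i:\tau_{\mathbf{0}}^{(i)}<\tau_m^{(i)}\,\}$, a sum of independent indicators with $\mathbb{E}[\hat{N}_m]=\sum_x\eta_0(x)p_x^m=\Omega_m$ and $\operatorname{Var}(\hat{N}_m)\le\Omega_m$. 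An explorer counted by $\hat{N}_m$ meets $\mathbf{0}$ in the MSIA unless it settles strictly before reaching $\mathbf{0}$; calling such an explorer \emph{diverted} and writing $D_m$ for their number, we get $N_m\ge\hat{N}_m-D_m$ pointwise (re--entries into $A_m$ only help).

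A diverted explorer settles at some $y_i\in A_m$ --- since $\sigma_i<\tau_{\mathbf{0}}^{(i)}<\tau_m^{(i)}$ --- at distinct vertices, and its phantom continuation, which by construction is $\xi_i$ after time $\sigma_i$, is a $P$--walk from $y_i$ that hits $\mathbf{0}$ before leaving $A_m$. Letting $F_y$ denote the event that the phantom walk attached to the (unique, if any) explorer settling at $y$ --- or else an auxiliary independent $P$--walk from $y$ --- hits $\mathbf{0}$ before exiting $A_m$, we obtain $D_m\le L_m:=\sum_{y\in A_m}\mathbf{1}_{F_y}$. Crucially, the MSIA uses only the primary stacks, so the events $F_y$ are independent of the MSIA and of one another, with $\mathbb{P}[F_y]=p_y^m$; hence $\mathbb{E}[L_m]=\Lambda_m$ and $\operatorname{Var}(L_m)\le\Lambda_m$. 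Therefore $\mathbb{E}[\hat{N}_m-L_m]=\Omega_m-\Lambda_m=\Delta_m$ and $\operatorname{Var}(\hat{N}_m-L_m)\le2\Omega_m+2\Lambda_m=2\Delta_m+4\Lambda_m$. Since $p_{\mathbf{0}}^m=1$ we have $\Lambda_m\ge1$, so $(\ref{eq:deviation_condition})$ forces $\Delta_m\to\infty$ and $\Lambda_m/\Delta_m^2\to0$, and Chebyshev's inequality gives, for fixed $r$,
\[
\mathcal{P}^{\gamma^{(m)}}[N_m<r]\ \le\ \mathbb{P}[\hat{N}_m-L_m<r]\ \le\ \frac{2\Delta_m+4\Lambda_m}{(\Delta_m-r)^2}\ \longrightarrow\ 0 .
\]
Thus $\mathcal{P}^{\gamma^{(m)}}[\mathcal{A}_r(\mathbf{0})]=\mathcal{P}^{\gamma^{(m)}}[N_m\ge r]\to1$, and by the first display $\mathbb{P}^{\eta}[\mathcal{A}_r(\mathbf{0})]=1$ for every $r$, so $\eta$ is nonfixating at $\mathbf{0}$.

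The substantive point is the coupling in the middle two paragraphs: one must realise \emph{both} $\hat{N}_m$ and the variable $L_m$ dominating the number of diverted explorers as honest sums of independent indicators, in spite of the strong correlations inside the aggregation cluster. The phantom--continuation device is exactly what decouples $L_m$ from the dynamics; the remaining ingredients --- the expectations $\Omega_m$ and $\Lambda_m$, the variance bounds, and the verification that a diverted explorer certifies some $F_y$ --- are routine bookkeeping, and the hypothesis enters only through $\Delta_m\to\infty$ together with $\Lambda_m=o(\Delta_m^2)$.
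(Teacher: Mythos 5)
Your proposal is correct and follows essentially the same route as the paper: reduce to the MSIA for $\eta_0|_{A_m}$ via Lemma \ref{lem:coupling_with_MSIA}, extend each explorer to a full independent $P$-walk (the ghost/phantom device from the IDLA argument), dominate the number of ``diverted'' explorers by independent indicators attached to the distinct settling vertices (mean $\Lambda_m$, just as the paper's $\hat L$), and conclude by Chebyshev using $\Delta_m\to\infty$ and $\Lambda_m=o(\Delta_m^2)$. The only cosmetic difference is that you fix $r$ while the paper takes $r(m)=\lfloor\Delta_m/3\rfloor$, and your stack construction makes the stochastic domination $\mathcal{P}[L\ge a]\le\mathcal{P}[\hat L\ge a]$ of the paper into a pathwise bound, which is the same idea.
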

\begin{proof}
Since $\Lambda_{m}\ge p_{\mathbf{0}}^{m}=1$, \eqref{eq:deviation_condition}
implies that $\Delta_{m}\to\infty$. Writing $\mathcal{P}^{m}$ for
$\mathcal{P}^{\left(S,\eta_{m},P\right)}$, by Lemma \ref{lem:coupling_with_MSIA}
it is thus enough to show that for $r(m)=\lfloor\Delta_{m}/3\rfloor$,
\begin{equation}
\lim_{m\to\infty}\mathcal{P}^{m}\left[\mathcal{A}_{r(m)}\right]=1.\label{eq:many_visits_prob_to_1}\end{equation}
That is, we show that the probability $\mathbf{0}$ will be visited
at least $\lfloor\Delta_{m}/3\rfloor$ times by an explorer in the
MSIA process defined by $\left(S,\eta_{m},P\right)$ goes to one with
$m$.

The condition in \eqref{eq:deviation_condition} also implies the
stronger condition that\begin{equation}
\lim_{m\to\infty}\frac{\Delta_{m}}{\Omega_{m}^{1/2}}=\infty.\label{eq:dev_condition2}\end{equation}
To see this, assume first that $\Delta_{m}>\Lambda_{m}$ in which
case $\Delta_{m}^{2}/\Omega_{m}=\Delta_{m}^{2}/(\Delta_{m}+\Lambda_{m})>\Delta_{m}/2$.
Otherwise $\Delta_{m}^{2}/\Omega_{m}=\left(\Delta_{m}^{2}/\Lambda_{m}\right)\cdot\Lambda_{m}/(\Lambda_{m}+\Delta_{m})\ge\Delta_{m}^{2}/2\Lambda_{m}$.
By assumption, both $\Delta_{m}$ and $\Delta_{m}^{2}/\Lambda_{m}$
go to infinity with $m$, and we get \eqref{eq:dev_condition2}.

Fix $k>0$ and choose $m$ large enough so that $\Delta_{m}>3k\Omega_{m}^{1/2}$.
Using the idea from the original IDLA paper \cite{lawler1992internal},
we let the explorers positioned by $\eta_{m}$ start the walks one
at a time but assume that once an explorer reaches an unoccupied vertex
and remains there, his ghost continues to walk forever. Thus to each
explorer we associate a \emph{walk} that begins together with the
explorer but continues indefinitely. Let $W=W(m)$ be the number of
walks that visit $\mathbf{0}$ before exiting $A_{m}$, and let $L=L(m)$
be the number of walks that visit $\mathbf{0}$ before exiting $A_{m}$,
but do this as ghosts (i.e. after stopping in the original model).
$W-L$ thus counts the explorers that visit $\mathbf{0}$ before stopping
and before exiting $A_{m}$. Letting $F(m)$ be the event $\left\{ W-L<\Delta_{m}/3\right\} $,
to prove \eqref{eq:many_visits_prob_to_1} it is enough to show that
the probability of $F(m)$ goes to zero with $m$. We write $\mathcal{P}\left[\cdot\right]=\mathcal{P}(m)\left[\cdot\right]$
for the law of the MSIA with ghosts.

$W$ is a sum of independent variables with mean $\Omega_{m}$. $E\left[L\right]$
is hard to calculate, but note that each ghost that contributes to
$L$ can be tied to the unique point at which it turns from an explorer
into a ghost. Thus, by the Markov property, if we start an independent
walk from each vertex in $A_{m}$ and let $\hat{L}$ be the number
of such walks that hit $\mathbf{0}$ before exiting $A_{m}$, we have
$\mathcal{P}[L\ge a]\le\mathcal{P}[\hat{L}\ge a]$.

In particular, $E[L]\le E[\hat{L}]=\Lambda_{m}$ and we have that
$E\left[W\right]-E\left[L\right]\ge\Delta_{m}$. Thus we upperbound
$F(m)$ with the union bound \begin{equation}
\mathcal{P}\left[\Omega_{m}-W>\Delta_{m}/3\right]+\mathcal{P}\left[L-\Lambda_{m}>\Delta_{m}/3\right].\label{eq:W_L_close_to_omega_Lambda}\end{equation}
To get a bound on the probability of deviation from the mean, note
that each of $W$ and $\hat{L}$ is a sum of independent indicators
and thus their variances are bounded by their means. By this and the
Chebyshev inequality, \[
\mathcal{P}\left[\Omega_{m}-W>\Delta_{m}/3\right]\le\mathcal{P}\left[\Omega_{m}-W>k\Omega_{m}^{1/2}\right]\le\mathcal{P}\left[\Omega_{m}-W>k\sigma_{W}\right]\le k^{-2}\]
 and similarly \[
\mathcal{P}\left[L-\Lambda_{m}>\Delta_{m}/3\right]\le\mathcal{P}\left[\hat{L}-\Lambda_{m}>\Delta_{m}/3\right]\le k^{-2}.\]
Since $k$ was arbitrary we are done.
\end{proof}

\section{\label{sec:Nonfixation-for-Random}Nonfixation for Random Initial
Occupation}

The above theorem treats the case of a fixed initial occupation. From
here on, we take $\eta_{0}$ to be random, using boldcase $\mathbf{P}$
and $\mathbf{E}$ to denote the probability and expectation corresponding
to the law of the initial occupation.

For the formulation of below theorem, it is simpler to view $S$ as
an infinite connected network, i.e. a graph where each edge $e\in E(S)$
is assigned a conductance $c(e)$. For a vertex $x\in S$, let $\mathcal{E}(x)$
be the edges in $S$ that have $x$ as an endpoint. Let $\pi(x)=\sum\limits _{e\in\mathcal{E}(x)}c(e)$.
We call $\pi(x)$ the weight of $x$. Let $P$ be the transition kernel
of a simple random walk (SRW) on the network $S$. That is, unlike
SRW on a graph in which the transition probability from a vertex $x$
to a neighbor is the reciprocal of the degree of $x$, in SRW on a
network, the probability to move from $x$ to a neighbor connected
by edge $e$ is $c(e)/\pi(x)$. 

We say $S$ is $\gamma$-bounded if there is a constant $\gamma>0$
such that $\gamma<\pi(x),c(e)<\gamma^{-1}$ for each vertex $x$ and
edge $e$ of $S$. A SRW on a $\gamma$-bounded network includes diverse
examples such as SRW on a bounded degree graph and a large class of
bounded range walks on transitive graphs.
\begin{theorem}
\label{thm:uncor_mean_gt_1}Let $S$ be a $\gamma$-bounded network,
and let $P$ be the kernel of an SRW on $S$. Let $\left\{ \eta_{0}(x)\right\} _{x\in S}$
be uncorrelated r.v.'s, all with uniformly bounded variance $V<\infty$
and mean $1+\epsilon$ for some $\epsilon>0$. Then $\mathbf{P}$-almost
surely , $\eta=\left(S,\eta_{0},P\right)$ is nonfixating. \end{theorem}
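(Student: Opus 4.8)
The strategy is to verify the hypothesis of Theorem \ref{thm:Delta_deviation} almost surely, with a well-chosen sequence of sets $A_m$. Since $S$ is $\gamma$-bounded, the natural choice is $A_m = B(\mathbf{0}, m)$, the ball of radius $m$ in the graph metric. The quantities to control are $\Lambda_m = \sum_{x \in A_m} p_x^m$ and $\Omega_m = \sum_{x \in A_m} \eta_0(x) p_x^m$, where $p_x^m = P_x[\tau_{\mathbf 0} < \tau_m]$, and we want $\Delta_m/\Lambda_m^{1/2} = (\Omega_m - \Lambda_m)/\Lambda_m^{1/2} \to \infty$ a.s.

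First I would treat the annealed (expectation) side. Because $\mathbf E[\eta_0(x)] = 1+\epsilon$, we have $\mathbf E[\Omega_m] = (1+\epsilon)\Lambda_m$, hence $\mathbf E[\Delta_m] = \epsilon \Lambda_m$. So in expectation $\Delta_m/\Lambda_m^{1/2} = \epsilon \Lambda_m^{1/2}$, and I need $\Lambda_m \to \infty$; in fact I want $\Lambda_m$ to grow at least like a positive power of $m$ so that Borel--Cantelli-type arguments close. This is where the appendix lemma enters: for SRW on a $\gamma$-bounded network, $\Lambda_m = \sum_{x\in A_m} p_x^m$ equals (up to the weight $\pi(\mathbf 0)$, which is bounded above and below) the expected number of visits to $\mathbf 0$ before exiting $A_m$, which by the appendix lemma lower-bounds the expected exit time $\mathbf E_{\mathbf 0}[\tau_m]$ divided by... — more precisely, the appendix gives $\mathbf E_{\mathbf 0}[\tau_{A_m^c}] \le (\text{something}) \cdot \Lambda_m$, and since a SRW on a $\gamma$-bounded network needs on the order of $m^2$ steps to exit a ball of radius $m$ (a variance/martingale estimate using bounded conductances), we get $\Lambda_m \gtrsim m^2$, or at the very least $\Lambda_m \gtrsim m^{c}$ for some $c>0$. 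That polynomial growth is the key output of this step.

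Next I would handle the fluctuations. Since the $\eta_0(x)$ are uncorrelated with variance bounded by $V$, $\mathrm{Var}(\Omega_m) = \sum_{x\in A_m} \mathrm{Var}(\eta_0(x)) (p_x^m)^2 \le V \sum_{x\in A_m} (p_x^m)^2 \le V \sum_{x\in A_m} p_x^m = V\Lambda_m$, using $p_x^m \le 1$. So $\Omega_m$ concentrates around $(1+\epsilon)\Lambda_m$ with standard deviation $O(\Lambda_m^{1/2})$, i.e. the fluctuation of $\Delta_m$ is of the same order $\Lambda_m^{1/2}$ as the target denominator — Chebyshev alone gives $\mathbf P[\Delta_m < \tfrac{\epsilon}{2}\Lambda_m] \le 4V/(\epsilon^2 \Lambda_m) \to 0$, but not summably fast in general. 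To upgrade convergence in probability to the almost sure statement required by Theorem \ref{thm:Delta_deviation}, I would pass to a sparse subsequence $m_j$ along which $\Lambda_{m_j}$ grows geometrically (possible since $\Lambda_m \gtrsim m^c$ and $\Lambda_m$ is, say, non-decreasing in $m$ or can be made so by monotonicity in the sets), so that $\sum_j 1/\Lambda_{m_j} < \infty$; then Borel--Cantelli gives $\Delta_{m_j} \ge \tfrac{\epsilon}{2}\Lambda_{m_j}$ for all large $j$ a.s., hence $\Delta_{m_j}/\Lambda_{m_j}^{1/2} \ge \tfrac{\epsilon}{2}\Lambda_{m_j}^{1/2} \to \infty$ a.s. Applying Theorem \ref{thm:Delta_deviation} with the subsequence $\{A_{m_j}\}$ yields nonfixation at $\mathbf 0$, and by a union bound over the countably many vertices of $S$ (translating the argument, or just repeating it verbatim at each $x$), $\eta$ is nonfixating a.s.

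I expect the main obstacle to be the quantitative lower bound $\Lambda_m \to \infty$ fast enough — specifically, extracting from the $\gamma$-boundedness and the appendix lemma a genuine polynomial (or at least sufficiently fast) growth rate for $\sum_{x\in A_m} p_x^m$, uniformly over the possible geometry of $S$. The heuristic "$\Lambda_m \sim$ Green's function sum $\sim$ expected exit time $\sim m^2$" is clear, but making it rigorous on an arbitrary bounded-degree network (which may be, e.g., a tree where exit times are only linear and $\Lambda_m$ may be only $\Theta(m)$ — still enough, since we only need $\Lambda_m\to\infty$ and summability along a geometric subsequence) requires care about the worst case. The secondary subtlety is the bookkeeping to go from convergence in probability to an almost sure limit through a subsequence while still satisfying the exact hypothesis of Theorem \ref{thm:Delta_deviation}, which is stated for a full sequence but clearly only uses a subsequence in its proof.
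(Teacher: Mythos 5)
Your probabilistic skeleton is exactly the paper's: $\mathbf{E}[\Omega_m]=(1+\epsilon)\Lambda_m$, $\mathrm{Var}(\Omega_m)\le V\sum_x (p_x^m)^2\le V\Lambda_m$, Chebyshev, then Borel--Cantelli along a subsequence on which $\Lambda_m$ grows fast enough (the paper takes a subsequence with $\Lambda_m>m^2$), giving $\Delta_m\ge\tfrac{\epsilon}{2}\Lambda_m$ eventually and hence the hypothesis of Theorem \ref{thm:Delta_deviation}. That part is fine. The gap is in the step you yourself flag as the main obstacle: deriving $\Lambda_m\to\infty$ on an arbitrary $\gamma$-bounded network. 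The inequality you attribute to the appendix, $E_{\mathbf{0}}[\tau_m]\le(\text{const})\cdot\Lambda_m$, is not what Lemma \ref{lem:E_to_G} says and is false in general: on $\mathbb{Z}$ one has $E[\tau_m]\asymp m^2$ while $\Lambda_m\asymp m$. Likewise, $\Lambda_m$ is \emph{not} the expected number of visits to $\mathbf{0}$ before exit, and your claims that exit times from balls are of order $m^2$ on every $\gamma$-bounded network and that $\Lambda_m\gtrsim m^2$ are both false (regular trees have linear exit times, and $\mathbb{Z}$ already kills $\Lambda_m\gtrsim m^2$); there is also no general $m^2$ upper bound on $E[\tau_m]$.

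The correct chain, which is the actual content of the paper's proof and which your sketch does not reconstruct, is: $p_x^m=G_m(x,\mathbf{0})/G_m(\mathbf{0},\mathbf{0})$, reversibility plus $\gamma$-boundedness give $\Lambda_m\ge c\,E_{\mathbf{0}}[\tau_m]/G_m(\mathbf{0},\mathbf{0})$, and the whole role of Lemma \ref{lem:E_to_G} is to control the Green's function in the denominator: from $E[\tau_m]\ge k\,G_m^2/\log G_m$ one extracts $G_m(\mathbf{0},\mathbf{0})\lesssim\bigl(E[\tau_m]\log E[\tau_m]\bigr)^{1/2}$, hence $\Lambda_m\gtrsim\bigl(E[\tau_m]/\log E[\tau_m]\bigr)^{1/2}$, and since the SRW is nearest-neighbor, $E[\tau_m]\ge m\to\infty$, so $\Lambda_m\to\infty$ (indeed with a polynomial rate, but that is a consequence, not an input). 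Without identifying that the quantity hidden in your ``(something)'' is precisely $G_m(\mathbf{0},\mathbf{0})$ and that the appendix lemma is what makes it asymptotically negligible relative to $E[\tau_m]$, the divergence of $\Lambda_m$ --- the only nontrivial analytic ingredient of the theorem --- is not established.
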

\begin{proof}
First we show that without any assumptions on $S$ or $P$, if there
is a vertex $\mathbf{0}\in S$ and a sequence of sets $\left\{ A_{m}\right\} _{m\in\mathbb{N}}$
containing $\mathbf{0}$ such that $\Lambda_{m}\to\infty$, then $\eta=\left(S,\eta_{0},P\right)$
is nonfixating at $\mathbf{0}$, $\mathbf{P}$-almost surely.

We may assume (by taking a subsequence) that $\left\{ A_{m}\right\} _{m\in\mathbb{N}}$
is such that $\Lambda_{m}>m^{2}$. By assumptions the variance of
$\Omega_{m}=\sum\limits _{x\in A_{m}}\eta_{0}(x)p_{x}^{m}$ is bounded
by $V\sum\limits _{x\in A_{m}}\left(p_{x}^{m}\right)^{2}\le V\Lambda_{m}$.
Thus by Chebyshev, \[
\mathbf{P}\left[\left|\mathbf{E}\left[\Omega_{m}\right]-\Omega_{m}\right|>\frac{\epsilon}{2}\Lambda_{m}\right]<\frac{4V}{\epsilon^{2}\Lambda_{m}}<cm^{-2}.\]
By Borel Cantelli, $\mathbf{P}$-almost surely, for all large enough
$m$, $\Omega_{m}$ is not far from its mean and we have for all large
enough $m$,\begin{equation}
\Delta_{m}=\Omega_{m}-\Lambda_{m}>\frac{\epsilon}{2}\Lambda_{m}.\label{eq:delta_order_of_lambda}\end{equation}
Since $\Lambda_{m}\to\infty$ the condition in Theorem \ref{thm:Delta_deviation}
holds and we are done.

Back to our $\gamma$-bounded network $S$, by above it is enough
to show that for an arbitrary vertex $\mathbf{0}\in S$, and $A_{m}=\left\{ x\in S:d_{S}(x,\mathbf{0})<m\right\} $,
where $d_{S}(\cdot,\cdot)$ is graph distance, we have $\Lambda_{m}\to\infty$.
For $x,y\in S$ and $m\in\mathbb{N}$, define the Green's function
as \[
G_{m}(x,y)=\mathbb{E}_{x}\left[\sum_{t=0}^{\tau_{m}-1}1_{\left\{ X(t)=y\right\} }\right]\]
where $X(t)$ is our discrete time SRW as defined above. By standard
Markov chain theory, \[
p_{x}^{m}=G_{m}(x,\mathbf{0})/G_{m}(\mathbf{0},\mathbf{0}).\]
Next, for a simple random walk on a network (see e.g. chapter 2 in
\citealp{lyons1997probability}), \[
G_{m}(x,\mathbf{0})\pi(x)=G_{m}(\mathbf{0},x)\pi(\mathbf{0}).\]
By our assumption that $\pi(\cdot)$ is uniformly bounded away from
zero and infinity on $S$, we may sum over $A_{m}$ to get that for
some $c>0$, \begin{equation}
\Lambda_{m}=\sum\limits _{x\in A_{m}}p_{x}^{m}\ge cG_{m}(\mathbf{0},\mathbf{0})^{-1}E\left[\tau_{m}\right].\label{eq:lambda_as_E_to_G}\end{equation}
Using the notation of Lemma \ref{lem:E_to_G}, if we take $Z=S\backslash A_{m}$
and $x=\mathbf{0}$, we have $G_{Z}=G_{m}(\mathbf{0},\mathbf{0})$.
Note that for any $m>1$, $A_{m}$ contains all the neighbors of $\mathbf{0}$
and the conditions of the lemma are satisfied. By \eqref{eq:E_gt_G}
we have\[
G_{m}(\mathbf{0},\mathbf{0})<(E[\tau_{m}]\log G_{m}(\mathbf{0},\mathbf{0})/k(\gamma))^{\frac{1}{2}}.\]
Since $a>\log a$ for any positive $a$, again from \eqref{eq:E_gt_G}
we get $G_{m}(\mathbf{0},\mathbf{0})<E[\tau_{m}]/k(\gamma)$ and plugging
into above we have \[
G_{m}(\mathbf{0},\mathbf{0})\le\left(E[\tau_{m}]\log\frac{E[\tau_{m}]}{k}/k\right)^{1/2}.\]
Since $E[\tau_{m}]\to\infty$ we get that the right hand side of \eqref{eq:lambda_as_E_to_G}
goes to infinity and are finished. \end{proof}
\begin{remark}
Note that some uniform bound on weights and edges of $S$ is needed
to get nonfixation with the conditions of above theorem. As an example
for this, we could take $\mathbb{N}$ as the graph and set the conductances
such that the the probability for a SRW to ever hit $1$ is summable
on $\mathbb{N}$. Thus any initial occupation distributed like $\eta_{0}$
in above theorem would fixate at $1$ a.s. as can be seen by bounding
the expected number of particles to hit $1$.
\end{remark}
\begin{remark}
If $\left\{ \eta_{0}(x)\right\} _{x\in S}$ are iid with mean greater
than one, we don't need the finite variance assumption since for some
$M<\infty$, $\eta_{0}(x)\wedge M$ has mean greater than one and
all moments, thus $\eta_{0}$ is nonfixating by Theorem \ref{thm:uncor_mean_gt_1}.\end{remark}
\begin{theorem}
\label{thm:ergodic}Let $S=\mathbb{Z}^{d}$ and let $P$ be the transition
kernel of a simple random walk on $S$. Let $\left\{ \eta_{0}(x)\right\} _{x\in\mathbb{Z}^{d}}$
be distributed such that the action of the group of translations is
ergodic and such that $\mu=\mathbf{E}\left[\eta_{0}(\mathbf{0)}\right]=1+\epsilon$
for some $\epsilon>0$. Then $\mathbf{P}$-almost surely, $\eta=\left(S,\eta_{0},P\right)$
is nonfixating. \end{theorem}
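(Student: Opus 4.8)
The plan is to run the proof of Theorem~\ref{thm:uncor_mean_gt_1} verbatim up to the point where independence was used, and to replace the second-moment Borel--Cantelli step by the multidimensional pointwise ergodic theorem. Since $\mathbb{Z}^d$ with unit conductances is $\gamma$-bounded, the computation in the proof of Theorem~\ref{thm:uncor_mean_gt_1} shows that, taking $A_m$ to be the ball of radius $m$ about $\mathbf{0}$, one has $\Lambda_m\to\infty$; this part is deterministic. By Theorem~\ref{thm:Delta_deviation} it therefore suffices to prove that $\mathbf{P}$-almost surely $\liminf_{m\to\infty}\Omega_m/\Lambda_m\ge 1+\tfrac{\epsilon}{2}$, because then $\Delta_m=\Omega_m-\Lambda_m\ge\tfrac{\epsilon}{2}\Lambda_m$ for all large $m$ and hence $\Delta_m/\Lambda_m^{1/2}\ge\tfrac{\epsilon}{2}\Lambda_m^{1/2}\to\infty$, so $\eta$ is nonfixating at $\mathbf{0}$. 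Since the same argument applies with any other vertex in place of $\mathbf{0}$, intersecting the countably many resulting full-measure events shows $\eta$ is nonfixating $\mathbf{P}$-a.s.

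To bound $\Omega_m/\Lambda_m$ I would use a layer-cake decomposition in the levels of $x\mapsto p_x^m=G_m(x,\mathbf{0})/G_m(\mathbf{0},\mathbf{0})$. Writing $L_t^m:=\{x\in A_m:p_x^m>t\}$ and $h^m(t):=\sum_{x\in L_t^m}\eta_0(x)$, we have $\Lambda_m=\int_0^1|L_t^m|\,dt$ and $\Omega_m=\int_0^1 h^m(t)\,dt$, so everything reduces to controlling the ratios $h^m(t)/|L_t^m|$. Because the Green's function of simple random walk killed on exiting a ball agrees, away from $\mathbf{0}$, with a radially symmetric decreasing profile up to a factor $1+o(1)$ (standard estimates), each level set $L_t^m$ is trapped between two concentric balls about $\mathbf{0}$ whose radii have ratio tending to $1$ as those radii grow; consequently $h^m(t)/|L_t^m|$ is trapped between the $\eta_0$-averages over those two balls, up to the same $1+o(1)$. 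Applying the multidimensional pointwise ergodic theorem to the ergodic $\mathbb{Z}^d$-action with the observable $\eta_0(\mathbf{0})$ --- which lies in $L^1$ since $\mathbf{E}[\eta_0(\mathbf{0})]=1+\epsilon<\infty$, and this integrability is the only assumption on the law that is used, which is why no moment bound beyond the mean appears --- gives that $\mathbf{P}$-a.s.\ the $\eta_0$-average over the ball of radius $r$ about $\mathbf{0}$ converges to $\mu=1+\epsilon$ as $r\to\infty$. Fixing a realization in this full-measure event, there is a random $R_1<\infty$ such that every ball of radius $\ge R_1$ has $\eta_0$-average within $\epsilon/4$ of $\mu$ and the geometric comparison above is accurate to within a further $\epsilon/4$; hence $h^m(t)/|L_t^m|\ge 1+\tfrac{3\epsilon}{4}$ whenever $L_t^m\supseteq B_{R_1}$.

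It remains to check that the levels $t$ with $L_t^m\supseteq B_{R_1}$ dominate both integrals. Since $p_x^m=P_x[\tau_{\mathbf{0}}<\tau_m]$ is nondecreasing in $m$ with strictly positive limit $P_x[\tau_{\mathbf{0}}<\infty]$, the sets $L_t^m$ increase in $m$ and as $t\downarrow0$; in particular, for $m>R_1$, $\delta_m:=\min_{x\in B_{R_1}}p_x^m>0$ satisfies $L_t^m\supseteq B_{R_1}$ for all $t\le\delta_m$, and, by the near-radial structure, $L_t^m$ is contained in a fixed bounded ball $B_{C R_1}$ for all $t>\delta_m$. Hence $\int_{\delta_m}^1|L_t^m|\,dt\le|B_{C R_1}|$ and $\int_{\delta_m}^1 h^m(t)\,dt\le\sum_{x\in B_{C R_1}}\eta_0(x)$ are bounded uniformly in $m$, while $\Lambda_m=\int_0^1|L_t^m|\,dt\to\infty$; so $\int_0^{\delta_m}|L_t^m|\,dt\to\infty$, and combining the lower bound on $h^m(t)/|L_t^m|$ over $[0,\delta_m]$ with the negligibility of $(\delta_m,1]$ yields $\liminf_m\Omega_m/\Lambda_m\ge 1+\tfrac{\epsilon}{2}$, as required.

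The step I expect to be the crux is the comparison of the level sets $L_t^m$ with concentric balls having radius ratio tending to $1$: a comparison valid only up to an absolute constant would merely bound $\Omega_m/\Lambda_m$ below by a constant possibly smaller than $1$ --- useless here --- so one genuinely needs the quantitative, relative-error-$o(1)$ form of the Green's-function estimate for SRW in a ball, and needs it uniformly enough in $m$ to handle the small levels $t$, which correspond to the outer shell $\{|x|\asymp m\}$ and carry a positive fraction of $\Lambda_m$. Once that estimate is granted, the ergodic-theoretic input is entirely standard.
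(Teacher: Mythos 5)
Your proposal is correct in substance and rests on the same two pillars as the paper's proof: the reduction via Theorem \ref{thm:Delta_deviation} together with $\Lambda_m\to\infty$ (imported, as you do, from the proof of Theorem \ref{thm:uncor_mean_gt_1} and Lemma \ref{lem:E_to_G}), a pointwise ergodic theorem for ball averages (the paper invokes Lindenstrauss for the tempered F{\o}lner sequence of balls), and the radial asymptotics of $p_x^m$ in the ball (the paper uses exactly the estimate you need, Lawler's Proposition 1.5.9, in the form $p_x^m=q_x^m+O(\lceil x\rceil^{1-d})$, which gives your level sets an $O(1)$ fuzz in radius and hence the relative-error-$o(1)$ ball sandwich for all levels whose radius is large, including those reaching the outer shell). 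Where you differ is in the organization of the comparison between $\Omega_m$ and $\Lambda_m$: the paper sums shell by shell (an Abel summation over the spheres $\partial A_k$, with the explicit asymptotics $E[\tau_m]=m^2+o(m^2)$, $G_m(\mathbf{0},\mathbf{0})^{-1}\to\sigma$, and the explicit profile $q_k^m$) and computes the exact limit $\Omega_m/\Lambda_m\to\mu$, whereas you use a layer-cake decomposition in the levels of $p_x^m$ and settle for the lower bound $\liminf_m\Omega_m/\Lambda_m\ge 1+\epsilon/2$, which suffices and lets you bypass the explicit constants; the two decompositions are essentially Abel summation in two guises, so the gain of your route is robustness (no exact asymptotics of $\Lambda_m$, no computation of $\sigma$ or $c_d$) at the cost of not identifying the limit. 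Two small bookkeeping points if you write it out: separate the two roles of $R_1$ (the random radius from the ergodic theorem versus the deterministic radius making the ball sandwich accurate, and make sure the \emph{inner} sandwich ball exceeds the ergodic radius, which a factor-of-two adjustment handles); and note that for $d=1,2$ the same scheme runs with the recurrent-case radial profiles $(m-|x|)/m$ and $\ln(m/|x|)/\ln m$, just as the paper remarks for its own computation.
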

\begin{proof}
Let $A_{m}=\left\{ x\in\mathbb{Z}^{d}:\|x\|_{2}<m\right\} $. As in
Theorem \ref{thm:uncor_mean_gt_1},\break $p_{x}^{m}=G_{m}(x,\mathbf{0})/G_{m}(\mathbf{0},\mathbf{0})$.
Since $\mathbb{Z}^{d}$ is regular, the Green function is symmetric
and we get $\Lambda_{m}=E\left[\tau_{m}\right]/G_{m}(\mathbf{0},\mathbf{0})$
which tends to infinity by Lemma \ref{lem:E_to_G} as shown in Theorem
\ref{thm:uncor_mean_gt_1}. Thus by Theorem \ref{thm:Delta_deviation}
it is enough to show that $\mathbf{P}$-almost surely, \begin{equation}
\Omega_{m}/\Lambda_{m}\to\mu.\label{eq:weighted_erg_lem}\end{equation}
We prove for $d>2$. The same proof works for $d=2$ using the estimate
$G_{m}(\mathbf{0},\mathbf{0})=\frac{2}{\pi}\ln n+O(n^{-1})$ (see
Proposition 1.6.7 of \citealp{lawler1996intersections}), and for $d=1$
where $G_{m}(\mathbf{0},\mathbf{0})=m$ . By the optional stopping
theorem with the martingale $\|X(t)\|^{2}-t$, $\mathbb{E}\left[\tau_{m}\right]=m^{2}+o(m^{2})$.
Second, $G_{m}(\mathbf{0},\mathbf{0})^{-1}\to\sigma$ where $\sigma(d)$
is the escape probability from $\mathbf{0}$. $\sigma$ is positive
since $d>2$. Note that in the $d=1,2$ cases, $G_{m}(\mathbf{0},\mathbf{0})^{-1}$
needs to be multiplied by $m$ and $\ln m$ respectively in order
to converge to a positive value. Continuing, we have \begin{equation}
\Lambda_{m}/m^{2}\to\sigma>0.\label{eq:exit_time_asymp}\end{equation}
Next, define the average $Q_{m}=\left|A_{m}\right|^{-1}\sum_{x\in A_{m}}\eta_{0}(x)$.
By Theorem 1.2 in \cite{lindenstrauss2001pointwise}, since $\mathbb{Z}^{d}$
is amenable and $A_{m}$ is a tempered Følner sequence, we have the
following law of large numbers \begin{equation}
Q_{m}\overset{\mathbf{P}-a.e.}{\longrightarrow}\mu.\label{eq:mean_erg}\end{equation}
For $x\in\mathbb{Z}^{d}$, write $\lceil x\rceil$ for the smallest
integer strictly greater than the Euclidean norm of $x$. Thus $\lceil\mathbf{0}\rceil=1$.
Let $c_{d}=\frac{2}{d-2}\sigma\omega_{d}^{-1}$ where $\omega_{d}$
is the volume of the unit sphere in $\mathbb{R}^{d}$. For $k\in\mathbb{N}$
let \[
q_{k}^{m}=c_{d}\left[k^{2-d}-m^{2-d}\right].\]
and for $x\in\mathbb{Z}^{d}$ let $q_{x}^{m}=q_{\lceil x\rceil}^{m}$.
By Proposition 1.5.9 in \cite{lawler1996intersections},\[
p_{x}^{m}=q_{x}^{m}+O(\lceil x\rceil^{1-d}).\]
Summing on level sets, and using that for $k\in\mathbb{N}$, $k^{1-d}-(k+1)^{1-d}<ck^{-d}$,
we get \begin{eqnarray*}
\sum_{x\in A_{m}}\eta_{0}(x)\left[p_{x}^{m}-q_{x}^{m}\right] & < & cQ_{m}\left|A_{m}\right|m^{1-d}+c\sum_{k=1}^{m-1}Q_{k}\left|A_{k}\right|k^{-d}\\
 & < & c\left[Q_{m}m+\sum_{k=1}^{m-1}Q_{k}\right].\end{eqnarray*}
Since $\Lambda_{m}$ is order of $m^{2}$ then by \eqref{eq:mean_erg}
\[
\sum_{x\in A_{m}}\eta_{0}(x)\left[p_{x}^{m}-q_{x}^{m}\right]/\Lambda_{m}\overset{a.e.}{\longrightarrow}0,\]
and to calculate the limit in \eqref{eq:weighted_erg_lem}, we can
replace $p_{x}^{m}$ in $\Omega_{m}=\sum\limits _{x\in A_{m}}\eta_{0}(x)p_{x}^{m}$
by $q_{x}^{m}$. Since $q_{m}^{m}=0$, we can write\begin{eqnarray*}
\sum_{x\in A_{m}}\eta_{0}(x)q_{x}^{m} & = & \sum_{k=1}^{m-1}Q_{k}\left|A_{k}\right|\left[q_{k}^{m}-q_{k+1}^{m}\right]\\
 & = & \sum_{k=1}^{m-1}Q_{k}\omega_{d}\left[k^{d}+o(k^{d})\right]c_{d}(d-2)\left[k^{1-d}+o(k^{1-d})\right]\\
 & = & 2\sigma\sum_{k=1}^{m-1}Q_{k}\left[k+o(k)\right].\end{eqnarray*}
Applying \eqref{eq:mean_erg} we get that \[
\sum_{x\in A_{m}}\eta_{0}(x)q_{x}^{m}/\sigma m^{2}\overset{a.e.}{\longrightarrow}\mu.\]

\end{proof}

\section{Fixation for infinite sleep rate}

In this section we prove that for the ARW process with $\lambda=\infty$,
if a deterministic initial occupation satisfies a {}``density bound''
condition around a fixed vertex, only a finite number of distinct
particles ever visit this vertex, almost surely. We then use this
to show there are nontrivial iid distributions on $\mathbb{Z}^{d}$
(including Poisson iid) that satisfy this condition, hence showing
together with Theorem \ref{thm:uncor_mean_gt_1} and the monotonicity
result in \cite{rolla2009absorbing}, that the critical density for
$\lambda=\infty$ is positive for all dimensions.

Let $\eta=\left(S,\eta_{0},P\right)$ define an ARW process. Fix $\mathbf{0}\in S$.
For a set of vertices $A\subset S$, let the weight of $A$ be $w(A)=\sum_{x\in A}\eta_{0}(x)$.
Let $W(n)$ denote the maximal weight among all connected vertex sets
of size $n$ that include $\mathbf{0}$. Let \[
\mathcal{A}(S,\eta_{0})=\sup\left\{ n\in\mathbb{N}:W(n)\ge n\right\} .\]

For the process defined by $\eta$, let $\mathcal{C}(T)$ be the random
set of vertices visited by any one of the particles by time $T\in[0,\infty]$
. Let $\mathcal{C}_{\mathbf{0}}(T)$ be the (possibly empty) connected
component of $\mathcal{C}(T)$ containing $\mathbf{0}$. 
\begin{theorem}
\label{thm:ARW_fixate}For the ARW process with infinite sleep rate
on $\eta=\left(S,\eta_{0},P\right)$, where $P$ is such that only
nearest neighbor moves are allowed,\begin{equation}
\mathbb{P}_{(\infty)}^{\eta}\left[\left|\mathcal{C}_{\mathbf{0}}(\infty)\right|\le\mathcal{A}(S,\eta_{0})\right]=1.\label{eq:bnd_size_C0}\end{equation}
In particular, since any particle that visits $\mathbf{0}$ is in
$\mathcal{C}_{\mathbf{0}}(\infty)$, and we assume $\eta_{0}$ is
locally finite, if $\mathcal{A}<\infty$ then only a finite number
of distinct particles ever visit $\mathbf{0}$ almost surely.\end{theorem}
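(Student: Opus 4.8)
The plan is to prove the deterministic inequality $|\mathcal{C}_{\mathbf{0}}(T)|\le\mathcal{A}(S,\eta_0)$ for every finite $T$, almost surely; since $\mathcal{C}_{\mathbf{0}}(\cdot)$ is nondecreasing and $\mathcal{C}_{\mathbf{0}}(\infty)=\bigcup_{T<\infty}\mathcal{C}_{\mathbf{0}}(T)$, this yields \eqref{eq:bnd_size_C0}. If $\mathcal{A}=\infty$ there is nothing to prove, so one may assume $\mathcal{A}<\infty$; the reduction also uses that $\mathcal{C}_{\mathbf{0}}(T)$ is a.s.\ finite for finite $T$, which is part of the non‑explosion/construction hypotheses mentioned above, cf.\ \cite{andjel1982invariant}. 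So fix a finite $T$ and put $C=\mathcal{C}_{\mathbf{0}}(T)$, $n=|C|$. Then $C$ is a connected vertex set of size $n$ containing $\mathbf{0}$, so $W(n)\ge w(C)$, and it is enough to prove $w(C)=\sum_{x\in C}\eta_0(x)\ge n$: that forces $W(n)\ge n$, hence $n\le\mathcal{A}(S,\eta_0)$ by the definition of $\mathcal{A}$.

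To prove $w(C)\ge n$ I would combine two facts. \textbf{(i) Under $\lambda=\infty$, a visited vertex is never empty thereafter.} When a particle first arrives at an empty vertex it is alone, hence asleep and immobile; the occupation of a visited vertex can drop only when a particle leaves, and as soon as the count would fall to $1$ the remaining particle is alone and falls asleep, so it cannot reach $0$ unless first raised back to $2$ by a newcomer. Hence at time $T$ each of the $n$ vertices of $C$ carries at least one particle, i.e.\ at least $n$ particles sit in $C$ at time $T$. \textbf{(ii) No particle crosses the edge‑boundary of $C$ during $[0,T]$.} Here the nearest‑neighbour assumption on $P$ is used: a trajectory is a connected nearest‑neighbour path, so a boundary crossing would involve a vertex $u\notin C$ and a neighbour $v\in C$ with $u$ occupied at some time $\le T$; but then $u\in\mathcal{C}(T)$ and $u$ is adjacent to $v\in C=\mathcal{C}_{\mathbf{0}}(T)$, which puts $u$ in $\mathbf{0}$'s component of $\mathcal{C}(T)$, i.e.\ $u\in C$ — a contradiction. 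Consequently any particle that is in $C$ at some time of $[0,T]$ is in $C$ throughout $[0,T]$; in particular the particles present in $C$ at time $T$ are exactly those initially placed in $C$, so their number is $w(C)$. Together with (i) this gives $w(C)\ge n$, as required.

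This proves \eqref{eq:bnd_size_C0}. For the ``in particular'': if a particle ever sits at $\mathbf{0}$, say at time $t$, then $\mathbf{0}\in\mathcal{C}_{\mathbf{0}}(T)$ for every finite $T\ge t$ and the particle is at $\mathbf{0}$ at time $t\le T$, so by (ii) it was initially placed in $\mathcal{C}_{\mathbf{0}}(T)\subseteq\mathcal{C}_{\mathbf{0}}(\infty)$; hence the particles that ever visit $\mathbf{0}$ are among the $\sum_{x\in\mathcal{C}_{\mathbf{0}}(\infty)}\eta_0(x)$ particles initially in $\mathcal{C}_{\mathbf{0}}(\infty)$, which is finite when $\mathcal{A}<\infty$ since then $|\mathcal{C}_{\mathbf{0}}(\infty)|\le\mathcal{A}<\infty$ and $\eta_0$ is locally finite. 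I expect the only genuinely delicate points to be fact (i) — the infinite‑sleep‑rate bookkeeping that a visited vertex never empties out — and making the boundary‑crossing argument in (ii) watertight; both are elementary but rely on the precise dynamics. The a.s.\ finiteness of $\mathcal{C}_{\mathbf{0}}(T)$ at finite $T$ is a matter of the standing construction hypotheses rather than of a new argument.
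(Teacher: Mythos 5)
Your deterministic core coincides with the paper's argument: with $\lambda=\infty$ a visited vertex can never empty out (a lone particle is asleep and immobile), so every vertex of $\mathcal{C}_{\mathbf{0}}(T)$ carries a particle at time $T$, and by the nearest-neighbour assumption no particle crosses the boundary of $\mathcal{C}_{\mathbf{0}}(T)$ during $[0,T]$, so those particles all started inside it; hence $w(\mathcal{C}_{\mathbf{0}}(T))\ge|\mathcal{C}_{\mathbf{0}}(T)|$, and when the cluster is finite this forces $|\mathcal{C}_{\mathbf{0}}(T)|\le\mathcal{A}(S,\eta_0)$, after which the monotone limit $T\to\infty$ and the ``in particular'' clause go through exactly as in the paper.

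The genuine gap is the step where you take the a.s.\ finiteness of $\mathcal{C}_{\mathbf{0}}(T)$ for finite $T$ to be ``part of the non-explosion/construction hypotheses''. It is not: well-definedness of the process and the absence of infinitely many particles at a single vertex in finite time do not prevent the visited set from having an infinite component at a finite (or even zero) time --- for instance $\mathcal{C}_{\mathbf{0}}(0)$ is already infinite with positive probability whenever the initially occupied sites percolate around $\mathbf{0}$ --- so such finiteness is a statement about the configuration and dynamics, not about the construction. Under $\mathcal{A}<\infty$ (the only case that matters) it is essentially a qualitative form of the very conclusion being proved, and your counting argument yields no contradiction when $C$ is infinite, since the particles occupying a finite piece of an infinite cluster may have originated far away inside it. The paper closes precisely this hole by a different device: it runs the deterministic argument for the truncated occupations $\eta_m=\eta_0|_{A_m}$ with $A_m\nearrow S$ finite, where there are finitely many particles and the visited cluster is automatically finite, obtaining $\mathbb{P}_{(\infty)}^{\left(S,\eta_m,P\right)}\left[\left|\mathcal{C}_{\mathbf{0}}(T)\right|\le\mathcal{A}(S,\eta_0)\right]=1$ for all $m$ and $T<\infty$ (and $\eta_m\le\eta_0$ only helps, since weights can only increase), and then transfers this to the infinite system using the standing assumption that local events measurable up to a finite time can be finitely approximated. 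Replacing your finiteness assumption by this truncation-and-approximation step (or supplying a separate proof of finiteness when $\mathcal{A}<\infty$) completes your proof; everything else you wrote matches the paper, and in fact spells out the two one-line claims the paper leaves to the reader.
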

\begin{proof}
Since $\lambda=\infty$, every vertex of $\mathcal{C}_{\mathbf{0}}(T)$
is occupied by a distinct particle at time $T$. Next, note that since
$P$ only allows nearest neighbor jumps, any particle in $\mathcal{C}_{\mathbf{0}}(T)$
at time $T$ was in $\mathcal{C}_{\mathbf{0}}(T)$ at time $0$. Hence
w.p. $1$, \[
w\left(\mathcal{C}_{\mathbf{0}}(T)\right)\ge\left|\mathcal{C}_{\mathbf{0}}(T)\right|.\]
So if $\mathcal{C}_{\mathbf{0}}(T)$ is finite then by definition
of $\mathcal{A}$, $\left|\mathcal{C}_{\mathbf{0}}(T)\right|\le\mathcal{A}$
w.p. $1$. We assume $\mathcal{A}<\infty$ as otherwise \eqref{eq:bnd_size_C0}
holds trivially. We calculate the probability for the local event
$\left\{ \left|\mathcal{C}_{\mathbf{0}}(T)\right|\le\mathcal{A}\right\} $
as a limit of its probability with initial occupation $\eta_{0}$
replace by $\eta_{m}=\eta_{0}|_{A_{m}}$ where $A_{m}\nearrow S$
are finite sets increasing to $S$. As always, we assume that $\eta_{0}$
and $P$ generate a well defined ARW process on $S$ and any local
event measurable up to a finite time can be finitely approximated.
For all $m$ and all $T<\infty$ \[
\mathbb{P}_{(\infty)}^{\left(S,\eta_{m},P\right)}\left[\left|\mathcal{C}_{\mathbf{0}}(T)\right|\le\mathcal{A}(S,\eta_{0})\right]=1.\]
Thus we get \eqref{eq:bnd_size_C0} for $\mathcal{C}_{\mathbf{0}}(T)$
for any $T<\infty$ and hence for $\mathcal{C}_{\mathbf{0}}(\infty)$
by monotonic convergence. \end{proof}
\begin{remark}
The combinatorical nature of the proof allows extension of the result
from ARW to a controllable process in which an adversary attempts
to bring infinitely many particles to $\mathbf{0}$ while observing
the rule that a particle may be moved only if it is not alone at a
vertex. Let $D(S,\eta_{0})=\limsup_{n\to\infty}W(n)/n$. In the setting
of the controllable process, $D<1$ implies finitely many visits while
$D>1$ allows for infinitely many visits, thus in a sense the condition
is sharp.
\end{remark}
Below we prove there exist nontrivial iid distribution on $\mathbb{Z}^{d}$
for which $\mathcal{A}(\mathbb{Z}^{d},\eta_{0})$ is finite almost
surely. For one dimension fixation in this setting was already known
and proven in \cite{rolla2009absorbing}.
\begin{corollary}
\label{cor:Fixation_for_d_1}Let $S=\mathbb{Z}$ and let $\left\{ \eta_{0}(x)\right\} _{x\in\mathbb{Z}}$
be an ergodic distribution on $\mathbb{N}\cup\left\{ 0\right\} $
with mean less than one. Let $P$ be a transition kernel on $S$ allowing
only nearest neighbor jumps. Then for the ARW process with $\lambda=\infty$,
$\mathbf{P}$-almost surely, $\eta=\left(S,\eta_{0},P\right)$ is
fixating.\end{corollary}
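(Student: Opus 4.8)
The plan is to obtain the corollary as a consequence of Theorem \ref{thm:ARW_fixate}. Since we are in the infinite sleep rate regime with nearest neighbor jumps, that theorem tells us it is enough to prove
\[
\mathbf{P}\bigl[\mathcal{A}(\mathbb{Z},\eta_0)<\infty\bigr]=1
\]
for the reference vertex $\mathbf{0}$; then only finitely many distinct particles ever visit $\mathbf{0}$ almost surely, and a short additional argument specific to $\mathbb{Z}$ — using that $\mathcal{C}_{\mathbf{0}}(\infty)$ is then a finite interval $[c,d]$ which no particle can leave (otherwise the component would grow, since jumps are nearest neighbor), and that a finite interval carrying finitely many particles under $\lambda=\infty$ dynamics must settle into the configuration with one particle per site — yields that $\mathbf{0}$ is fixating. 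Running this at every vertex and taking a countable union (the law of $\eta_0$ is translation invariant, being ergodic) then gives that $\mathbf{P}$-a.s. every vertex is fixating, hence $\eta$ is fixating.

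First I would reduce the combinatorial quantity $\mathcal{A}$ to a statement about partial sums. On $\mathbb{Z}$ the connected vertex sets through $\mathbf{0}$ are exactly the intervals $I=[-a,b]$ with $a,b\ge0$, for which $|I|=a+b+1$ and $w(I)=\sum_{x=-a}^{b}\eta_0(x)$, so $\mathcal{A}(\mathbb{Z},\eta_0)<\infty$ is equivalent to the assertion that only finitely many intervals $I\ni\mathbf{0}$ satisfy $w(I)\ge|I|$. Setting $R_k=\sum_{x=0}^{k-1}\eta_0(x)$ and $L_a=\sum_{x=1}^{a}\eta_0(-x)$ (with $R_0=L_0=0$), I would use the exact splitting
\[
w(I)-|I|=\bigl(L_a-a\bigr)+\bigl(R_{b+1}-(b+1)\bigr).
\]
Birkhoff's ergodic theorem, applied to the two stationary ergodic one-sided sequences $(\eta_0(x))_{x\ge0}$ and $(\eta_0(-x))_{x\ge1}$ (ergodicity of the $\mathbb{Z}$-action passes to the forward and the backward shift), gives $R_k/k\to\mu$ and $L_a/a\to\mu$ almost surely; since $\mu<1$ this upgrades to $R_k-k\to-\infty$ and $L_a-a\to-\infty$, so that $M_R:=\sup_{k\ge0}(R_k-k)$ and $M_L:=\sup_{a\ge0}(L_a-a)$ are a.s. finite. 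If there were infinitely many bad intervals $I_j=[-a_j,b_j]$, then $|I_j|\to\infty$ (there are only $n$ intervals of size $n$ through $\mathbf{0}$), hence $\max(a_j,b_j)\to\infty$, so after passing to a subsequence either $b_j\to\infty$ or $a_j\to\infty$; in the first case $w(I_j)-|I_j|\le M_L+\bigl(R_{b_j+1}-(b_j+1)\bigr)\to-\infty$, contradicting $w(I_j)\ge|I_j|$, and the second case is symmetric. Hence a.s. only finitely many bad intervals exist, which is exactly $\mathcal{A}(\mathbb{Z},\eta_0)<\infty$ a.s.

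The main obstacle is bookkeeping rather than conceptual: one needs uniformity over all intervals of a given size, and the mechanism that makes this work is precisely that in the splitting above at most one of the two arms can fail to be very negative once the interval is long — which is what the subsequence extraction exploits. A secondary point deserving a line or two is the reduction itself, i.e. that ``finitely many distinct particles visit $\mathbf{0}$'' truly implies fixation at $\mathbf{0}$ in the sense of the definition; this is where the finiteness of $\mathcal{C}_{\mathbf{0}}(\infty)$ and the nearest neighbor constraint on $\mathbb{Z}$ do the work. Finally, note the hypothesis $\mu<1$ is used exactly where it must be: at $\mu=1$ the walks $R_k-k$ and $L_a-a$ would be recurrent rather than escaping to $-\infty$, and the argument would break down, consistent with density $1$ being critical.
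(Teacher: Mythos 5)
Your proposal follows the paper's own route: invoke Theorem \ref{thm:ARW_fixate} and then use the ergodic theorem (law of large numbers) to show $W(n)<n$ for all large $n$, hence $\mathcal{A}(\mathbb{Z},\eta_0)<\infty$ almost surely. The paper compresses this into two lines, so your interval-splitting bookkeeping and the remarks on deducing fixation from the finiteness of $\mathcal{C}_{\mathbf{0}}(\infty)$ are just a more detailed write-up of the same argument, and they are correct.
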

\begin{proof}
By the law of large numbers, $W(n)<n$, for all large enough $n$
almost surely. Thus $\mathbf{P}$-almost surely, $\mathcal{A}(\mathbb{Z}^{d},\eta_{0})$
is finite and we have fixation. \end{proof}
\begin{corollary}
Let $S=\mathbb{Z}^{d}$ for $d\ge2$, and let $\mathcal{D}$ be a
probability mass function on $\mathbb{N}\cup\left\{ 0\right\} $ such
that for some constant $c>0$,\[
\sum_{n=0}^{\infty}\left(1-\sum_{i=0}^{n}\mathcal{D}(i)\right)^{1/d}<c.\]
Let $P$ be a transition kernel on $S$ allowing only nearest neighbor
jumps. Set the initial occupation $\left\{ \eta_{0}(x)\right\} _{x\in\mathbb{Z}^{d}}$
to be iid r.v.'s with distribution $\mathcal{D}$. Then for the ARW
process with $\lambda=\infty$, $\mathbf{P}$-almost surely, $\eta=\left(S,\eta_{0},P\right)$
is fixating. In particular, when $\lambda=\infty$, the critical density
for the Poisson iid initial occupation is positive for all dimensions.\end{corollary}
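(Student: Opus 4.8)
The plan is to push everything through Theorem~\ref{thm:ARW_fixate}. Since $\{\eta_0(x)\}$ is iid, it suffices to fix an arbitrary vertex, call it $\mathbf 0$, and show $\mathbf P$-almost surely $\mathcal A(\mathbb Z^d,\eta_0)<\infty$; then only finitely many particles ever visit $\mathbf 0$, so $\mathbf 0$ fixates almost surely, and by translation invariance $\eta$ is fixating. The quantity $W(n)$ is precisely the \emph{greedy lattice animal} maximum of the field $\{\eta_0(x)\}$, and $\mathcal A<\infty$ a.s.\ follows as soon as $\limsup_{n\to\infty}W(n)/n<1$ a.s. Writing $t_n=\mathbf P[\eta_0(\mathbf 0)>n]=1-\sum_{i=0}^{n}\mathcal D(i)$, the hypothesis says $\sum_{n\ge0}t_n^{1/d}<c$; since $t_n\le t_n^{1/d}$ this already gives $\mathbf E[\eta_0(\mathbf 0)]=\sum_n t_n<c$, and we take the dimensional constant $c=c(d)$ small.

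First I would invoke the classical a.s.\ linear upper bound for greedy lattice animals, which holds exactly under the tail condition $\sum_n t_n^{1/d}<\infty$: there is $C_d<\infty$ with $\limsup_n W(n)/n\le C_d\,\mathbf E[\eta_0(\mathbf 0)]$ almost surely. Choosing $c<1/(2C_d)$ then forces $\limsup_n W(n)/n\le C_d c<1$, so $W(n)<n$ eventually and Theorem~\ref{thm:ARW_fixate} yields fixation. For a self-contained derivation one argues along these lines: take $c$ small enough that $t_0=\mathbf P[\eta_0(\mathbf 0)\ge1]<c^{d}$ lies below the site-percolation threshold, so every level set $B_k=\{x:\eta_0(x)\ge k\}$, of density $t_{k-1}$, is subcritical. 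Bounding $W(n)=\max_{A}\sum_{k\ge1}|A\cap B_k|\le\sum_{k\ge1}M_k(n)$ with $M_k(n)$ the maximal number of $B_k$-sites in a connected set of size $n$ through $\mathbf 0$, it is enough to establish (i) $M_k(n)\le C_d(1+t_{k-1}^{1/d}n)$ for every $k$, with probability failing only by an amount summable along a geometric subsequence of $n$ (using that $W$ is non-decreasing), and (ii) $K(n):=\max_{x\in Q_n}\eta_0(x)=o(n)$ a.s., where $Q_n$ is a box of side $2n$ about $\mathbf 0$ containing every such $A$; (ii) is a routine Borel--Cantelli estimate since $\mathbf P[K(2^{j})\ge\epsilon2^{j}]\le|Q_{2^{j}}|\,t_{\epsilon2^{j}}\asymp 2^{jd}t_{\epsilon2^{j}}$ and $\sum_j 2^{jd}t_{\epsilon2^{j}}<\infty$ follows from $\sum_n t_n^{1/d}<\infty$. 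On the good event, $W(n)\le C_dK(n)+C_dn\sum_{k\le K(n)}t_{k-1}^{1/d}\le C_dK(n)+C_dnc<n$ for all large $n$.

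The hard part is estimate (i): a connected set of $n$ vertices should not be able to harvest more than order $1+p^{1/d}n$ open sites of a subcritical density-$p$ field. The naive union bound over the $\le\kappa^{n}$ size-$n$ lattice animals times a binomial tail, and the competing bound $M_k(n)\le|Q_n\cap B_k|$, both fail in the intermediate regime $n^{-d}\lesssim p\lesssim n^{-1}$; the fix is the geometric fact that joining $m$ open sites into a connected set of size $\le n$ forces $\sum_i r_i\le n$ for the successive nearest-neighbour distances $r_i$ encountered by a greedy traversal, which is the combinatorial core of the greedy-lattice-animal literature, and leaning on that cited theorem is the cleanest path. Finally, for the Poisson application: if $\eta_0(\mathbf 0)\sim\mathrm{Poisson}(\mu)$ with $0<\mu<1$ then $\mathbf P[\eta_0(\mathbf 0)>n]\le 2\mu^{n+1}/(n+1)!$, so $\sum_{n\ge0}\big(\mathbf P[\eta_0(\mathbf 0)>n]\big)^{1/d}\le 2^{1/d}\mu^{1/d}/(1-\mu^{1/d})\to0$ as $\mu\to0$; hence for $\mu$ sufficiently small the hypothesis of the corollary is met and the $\lambda=\infty$ ARW with iid $\mathrm{Poisson}(\mu)$ occupation fixates. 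Combined with the monotonicity in the density from \cite{rolla2009absorbing} (so that the fixating densities form an interval) this gives a strictly positive critical density in every dimension, and together with nonfixation above density one (Theorem~\ref{thm:uncor_mean_gt_1}) it lies in $(0,1]$.
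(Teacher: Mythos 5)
Your overall route is the same as the paper's: reduce, via Theorem \ref{thm:ARW_fixate}, to showing $W(n)<n$ for all large $n$ almost surely, get this from the greedy-lattice-animal linear growth results of \cite{martin2002linear} (see also \citealp{cox1993greedy}), and settle the Poisson case by checking the tail condition for small intensity and then invoking the monotonicity of \cite{rolla2009absorbing} together with Theorem \ref{thm:uncor_mean_gt_1}. The only real difference is cosmetic: you verify the tail condition for Poisson by a direct computation, whereas the paper quotes Proposition 8.1 of \cite{martin2002linear}, which bounds the tail sum by a constant times $\mathbf{E}\left[\mathcal{D}^{d+1}\right]$; both are fine.

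One statement you lean on is false as written, though the repair is immediate. The classical bound is not $\limsup_{n}W(n)/n\le C_{d}\,\mathbf{E}\left[\eta_{0}(\mathbf{0})\right]$ under the tail condition: for the two-point distribution taking value $M$ with probability $p$ and $0$ otherwise, the greedy-animal growth rate is of order $Mp^{1/d}$, which for small $p$ far exceeds the mean $Mp$, even though $\sum_{n}t_{n}^{1/d}=Mp^{1/d}<\infty$ in your notation. What Theorem 1 of \cite{martin2002linear} actually gives is $\limsup_{n}W(n)/n\le C_{d}\sum_{n}t_{n}^{1/d}$, i.e.\ a bound by precisely the quantity the corollary's hypothesis controls; taking the constant $c$ in the hypothesis smaller than $1/C_{d}$ then yields $W(n)<n$ eventually, which is exactly how the paper reads its constant $c$, and it is also the quantity appearing in the final display of your own sketch. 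So your argument stands once the invoked theorem is quoted in terms of the tail sum rather than the mean; note also that your ``self-contained derivation'' still defers its key estimate (i) to the greedy-lattice-animal literature, so, as in the paper, the proof ultimately rests on that citation rather than on an independent argument.
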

\begin{proof}
Theorem 1 in \cite{martin2002linear} (see also \citealp{cox1993greedy})
implies that with some $c>0$, for any distribution satisfying above
condition, we almost surely have $W(n)<n$ for all large enough $n$.
Thus $\mathbf{P}$-almost surely, $\mathcal{A}(\mathbb{Z}^{d},\eta_{0})$
is finite and we have fixation.

By Proposition 8.1 in \cite{martin2002linear} for some $c'$, \[
\sum_{n=0}^{\infty}\left(1-\sum_{i=0}^{n}\mathcal{D}(i)\right)^{1/d}<c'\mathbf{E}\left[\mathcal{D}^{d+1}\right].\]
For a Poisson distribution with intensity $\gamma$, the right hand
side of above tends to $0$ as $\gamma$ approaches $0$ and we are
done. 
\end{proof}

\section{Further Remarks and Questions }
\begin{enumerate}
\item There are many variations which can be analyzed using the above framework.
For example, given a function $f:S\to\mathbb{N}$ from a graph to
the naturals, let ARW($f$), be the ARW process where there must be
more than $f(x)$ particles at a vertex $x$ to know they are all
active. Thus the usual process is ARW($\mathbf{1}$). Lemma \ref{lem:coupling_with_MSIA}
and Theorem \ref{thm:Delta_deviation} can be modified by redefining
MSIA and $\Lambda_{m}$ according to $f$, and similar theorems on
nonfixation for fixed or random initial occupations can be proved.
\item What can be shown if the random walks of particles are dependent?
(e.g. an exclusion process) A review of the proof of Theorem \ref{thm:Delta_deviation}
shows we use a property of independent indicators that could arise
in weakly dependent situations as well. Namely that the variance of
$W$ and $\hat{L}$ is the same order as their mean. A different relationship
between these quantities could imply a similar theorem with an appropriate
update of \eqref{eq:deviation_condition}. 
\item Is there an example of a graph which has nonfixation for iid initial
occupation with mean smaller than one for infinite sleep rate? For
fixed finite sleep rate, can one find the exact critical density for
some graph with iid initial occupation? What are achievable values
for critical density for Cayley graphs? For general graphs?
\item The model is not an attractive particle system and thus it is not
clear whether there exist stationary distributions for nonfixating
scenarios or what could be possible candidates. Can one find a graph
with a stationary distribution for the ARW? 
\item When fixation occurs on a graph, what can we say about the speed with
which vertex fixation spreads?
\end{enumerate}
For more questions, conjectures and numerical results, see \cite{dickman2010activated}.

\noindent {\bf Acknowledgment:} 
Thanks to Gady Kozma and Vladas Sidoravicius for telling me about
this problem and for very helpful discussions and ideas. Thanks to
Itai Benjamini for suggesting some more directions and to a referee
for a careful reading which helped weed out problems in the original
manuscript.

\section{Appendix}

Here we prove a general lemma for simple random walks on a network
relating the average exit time from a set starting at some vertex
to the average number of visits to that vertex before exit. Let $S$
be a network, let $Z\subset S$ be a set of vertices and let $\tau_{Z}$
be the first hitting time of $Z$ for $X(t)$ the discrete time simple
random walk on $S$. For $x\in S$ we set $G_{Z}(x)=E_{x}\left[\sum_{t=0}^{\tau_{Z}}\mathbf{1}_{\left\{ X(t)=x\right\} }\right]$,
the expected number of visits to $x$ of a walk starting at $x$ before
$\tau_{Z}$. Let $B_{r}(x)=\left\{ v\in S:d_{S}(v,x)<r\right\} $
and let $\partial B_{r}(x)=\left\{ v\in S:d_{S}(v,x)=r\right\} $where
$d_{S}(\cdot,\cdot)$ is graph distance in $S$.
\begin{lemma}
\label{lem:E_to_G}Assume $S$ is an infinite $\gamma$-bounded connected
network (i.e. there is a $\gamma>0$ for which $\gamma<c(e),\pi(x)<\gamma^{-1}$
for every vertex $x$ and edge $e$ in $S$). Then there is a $k=k(\gamma)>0$
such that for any $x\in S$ and $Z\subset S$ where $B_{2}(x)\cap Z=\emptyset$.
\begin{equation}
E_{x}\left[\tau_{Z}\right]>kG_{Z}^{2}/\log G_{Z}.\label{eq:E_gt_G}\end{equation}
\end{lemma}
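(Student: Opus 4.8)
The plan is to relate $E_x[\tau_Z]$ and $G_Z$ through two local estimates. First, $G_Z(x)$ is (essentially) the reciprocal of the escape probability from $x$ to $Z$: writing $p = P_x[\tau_Z < \tau_x^+]$ for the probability that the walk from $x$ hits $Z$ before returning to $x$, we have $G_Z(x) = 1/p$ by the usual geometric-series argument (number of returns to $x$ before absorption in $Z$ is geometric). So the hypothesis is that $p$ is small, i.e. escape to $Z$ is hard, and we must deduce that $\tau_Z$ is large in expectation. The natural mechanism: each excursion from $x$ that fails to reach $Z$ still takes some time, and there are about $1/p = G_Z$ such excursions before success; if a positive fraction of failed excursions last at least of order $\log G_Z$ steps, we'd get $E_x[\tau_Z] \gtrsim G_Z \log G_Z$, which is far more than claimed — the weaker bound $G_Z^2/\log G_Z$ suggests a cruder argument is intended.

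Here is the cruder route I would actually take. Condition on the walk and run it until $\tau_Z$. Let $N = \sum_{t=0}^{\tau_Z} \mathbf{1}_{\{X(t)=x\}}$ be the number of visits to $x$, so $E_x[N] = G_Z$. Between consecutive visits to $x$ the walk makes an excursion; by $\gamma$-boundedness the walk at $x$ has bounded degree and each neighbor is reached with probability bounded below, and since $B_2(x) \cap Z = \emptyset$, from any neighbor $y$ of $x$ the walk can with probability bounded below (by a constant $c(\gamma)$ depending only on $\gamma$ and the degree bound) take a two-step detour out to $\partial B_2(x)$ and back, i.e. spend a geometrically-distributed (with parameter bounded away from $1$) number of extra steps before returning. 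More simply: each excursion from $x$ has expected length bounded below by a positive constant $c_0(\gamma)$, independent of everything, because with probability bounded below the first step goes to a neighbor and the return to $x$ then takes at least one more step. Hence $E_x[\tau_Z] \ge c_0(\gamma)\, E_x[N] = c_0(\gamma)\, G_Z$. That only gives a linear bound, so a single-excursion estimate is not enough; we need the excursions to be long when $G_Z$ is large, and the mechanism must be that escape being hard forces typical excursions to probe deep into $S$.

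To get the $\log$ in the denominator rather than in the numerator, I would instead control the walk restricted to a ball. Fix $R$ and consider $\tau_R = \tau_{\partial B_R(x)}$. On the event $\{\tau_R < \tau_Z\}$ (which, for $R$ large enough that $\partial B_R(x)$ separates $x$ from $Z$ appropriately, or simply using $\tau_Z \ge \tau_R \wedge \tau_Z$) the walk has spent at least $R$ steps. The $\gamma$-bounded network has at most $C^R$ vertices in $B_R(x)$ for some $C=C(\gamma)$ (bounded degree), and a standard bound gives $G_Z(x) \le G_{B_R(x)^c}(x) + (\text{correction})$; more usefully, if the walk never exits $B_R(x)$ before hitting $Z$, then $G_Z$ is bounded by the Green's function of the finite set $B_R(x)$, which is at most $|B_R(x)| \le C^R$ (expected time in a finite set is at most its size times a constant, by $\gamma$-boundedness). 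So $P_x[\tau_Z < \tau_R] $ small forces... — more cleanly: $G_Z(x) \le E_x[\tau_Z]$ trivially since every visit to $x$ is a time step, but also $G_Z(x) \le C^R + C^R \cdot P_x[\tau_R < \tau_Z]\cdot \sup(\cdots)$, and iterating/optimizing over $R$ with $R \asymp \log G_Z$ turns the geometric growth $C^R$ into a polynomial factor $G_Z$, while $E_x[\tau_Z] \ge R \cdot P_x[\tau_R < \tau_Z]$ and one shows $P_x[\tau_R < \tau_Z]$ is bounded below once $C^R \lesssim G_Z$. Choosing $R$ of order $\log G_Z / \log C$ then yields $E_x[\tau_Z] \ge k(\gamma) R \cdot G_Z / (\text{something}) \ge k(\gamma) G_Z^2/\log G_Z$ after bookkeeping; the appendix lemma's appeal to Green's-function-on-balls estimates (and the factor $k(\gamma)$) confirms this is the structure.

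\textbf{Main obstacle.} The delicate point is coupling the exit-time lower bound to $G_Z$ in the right regime: one must choose the ball radius $R \asymp \log G_Z$ and verify that (i) $|B_R(x)| \le G_Z^{O(1)}$, so the contribution to $G_Z$ from excursions staying inside $B_R(x)$ is not the dominant one unless $G_Z$ is already controlled, and (ii) with probability bounded below in $\gamma$ the walk from $x$ exits $B_R(x)$ (hence survives $\ge R$ steps) before hitting $Z$ — this uses that $Z$ is far from $x$ (the hypothesis $B_2(x)\cap Z=\emptyset$, possibly needing $B_R\cap Z=\emptyset$, which must be finessed since $Z$ may intersect $\partial B_R$) together with a lower bound on the chance a $\gamma$-bounded walk reaches distance $R$ before returning to $x$ — roughly $1/R$ by the effective-resistance/flow estimate, giving $E_x[\tau_Z] \ge R \cdot (c/R) \cdot (\text{number of attempts} \asymp G_Z) = c\, G_Z$, still linear. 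Extracting the extra factor $G_Z/\log G_Z$ forces a more careful accounting — presumably that among the $\asymp G_Z$ excursions, a positive fraction must reach distance $\asymp \log G_Z$ because short excursions alone cannot make escape to $Z$ as improbable as $1/G_Z$ when $Z$ lies outside $B_R(x)$ — and making that last inequality quantitative, uniformly in the network subject only to $\gamma$-boundedness, is where the real work lies.
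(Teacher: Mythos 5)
Your opening moves match the paper's (excursion decomposition at $x$, the identity $G_Z=1/p$ with $p$ the escape probability, so the number of excursions before hitting $Z$ is geometric with mean $G_Z$), but the proposal then derails on a mis-comparison of scales: you treat $G_Z^{2}/\log G_Z$ as a \emph{weaker} target than $G_Z\log G_Z$, whereas for large $G_Z$ it is much stronger (indeed $G_Z^{2}/\log G_Z\ge G_Z\log G_Z$ once $G_Z\ge(\log G_Z)^{2}$, and the lemma is used in the regime $G_Z\to\infty$). This misreading drives your choice of ball radius $R\asymp\log G_Z$, and with that choice no bookkeeping can succeed: even if a positive fraction of the $\asymp G_Z$ excursions reached distance $R$ and each cost of order $R^{2}$ steps, you would only obtain $E_x[\tau_Z]\gtrsim G_Z(\log G_Z)^{2}$, far below $G_Z^{2}/\log G_Z$; your own line ``$E_x[\tau_Z]\ge k R\cdot G_Z/(\cdots)\ge k G_Z^{2}/\log G_Z$'' is arithmetically false for $R\asymp\log G_Z$. (A side error: the expected time spent in a finite set is \emph{not} at most a constant times its size --- consider the exit time of $[-n,n]\subset\mathbb{Z}$ from the origin, which is $\asymp n^{2}$.)

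The correct radius is $r\asymp G_Z$, not $\log G_Z$, and the missing ingredient is a sub-Gaussian estimate converting ``the excursion reaches distance $r$'' into ``the excursion lasts $\gtrsim r^{2}/\log r$ steps''. The paper's proof runs: (i) by the effective-conductance/monotonicity bound on a $\gamma$-bounded network, each excursion from $x$ reaches $\partial B_r(x)$ before returning with probability at least $\gamma^{2}/r$; taking $r=cG_Z$, and using that with probability at least $3/4$ there are at least $\alpha G_Z$ independent excursions, with probability bounded below at least one excursion reaches distance $\asymp G_Z$; (ii) the Carne--Varopoulos bound $P\left[X(t)=x\,|\,X(0)=y\right]\le 2\left(\pi(y)/\pi(x)\right)^{1/2}\exp\left(-r^{2}/2t\right)$ for $y\in\partial B_r(x)$ shows that, with probability bounded below, such an excursion does not return to $x$ within $\lfloor k_1 r^{2}/\log r\rfloor$ steps, so that single long excursion already contributes $\gtrsim G_Z^{2}/\log G_Z$ to $\tau_Z$, and the lemma follows since this happens with probability at least $1/2$. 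Thus the $\log$ in the denominator comes from Carne--Varopoulos, not from volume growth of balls; without some such heat-kernel input, the excursion accounting in your sketch cannot get past a bound that is linear in $G_Z$ up to logarithmic factors, so the gap is not mere bookkeeping but a missing key idea.
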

\begin{proof}
Fix $x\in S$, set $T_{0}=0$, and define for each $i\in\mathbb{N}$
the r.v.'s \[
T_{i}=\inf\left\{ t>T_{i-1}:X(t)=x\right\} .\]
Let $i^{*}=\inf\left\{ i:T_{i}=\infty\right\} $. For $1\le i\le i^{*}$
let $\rho_{i}=T_{i}-T_{i-1}$. We show there are positive constants
$k_{1},k_{2}$ dependent only on $\gamma$ such that 

\begin{equation}
P\left[\rho_{1}\ge k_{1}r^{2}/\log r\right]\ge\frac{k_{2}}{r}.\label{eq:excursion_len_low_bnd}\end{equation}
By electrical network interpretation (see e.g. \citealp{lyons1997probability}),
the probability for a walk beginning at $x$ to hit $\partial B_{r}(x)$
before returning to $x$ is $C_{eff}(r)/\pi(x)$, where $C_{eff}(r)$
is the effective conductance from $x$ to $\partial B_{r}(x)$. Since
$S$ is infinite and connected, for any $r$ there is a connected
path of $r$ edges from $x$ to $\partial B_{r}(x)$. By the monotonicity
principle, $C_{eff}(r)$ is at least the conductance on this path,
which is $\gamma r^{-1}$. Thus the probability to hit some $y\in\partial B_{r}(x)$
before returning to $x$ is at least $\gamma^{2}r^{-1}$.

Next, let $y\in\partial B_{r}(x)$. By the Carne-Varopoulos upper
bound (see \citealp{varopoulos1985long}), \[
P\left[X(t)=x|X(0)=y\right]\le2\left(\pi(y)/\pi(x)\right)^{1/2}\exp\left(-\frac{r^{2}}{2t}\right)\]
and thus, for some $k_{1}(\gamma),k_{2}(\gamma)>0$ and all $r>1$,
the probability that a walk starting at $y\in\partial B_{r}(x)$ does
not hit $x$ in the next $\lfloor k_{1}r^{2}/\log r\rfloor$ steps,
by union bound, is greater than $k_{2}$. Together with our lower
bound on the probability that we arrive at such a $y\in\partial B_{r}(x)$,
we get \eqref{eq:excursion_len_low_bnd}.

Next, let $g=\inf\left\{ i:T_{i}>\tau_{Z}\right\} $ be the number
of visits of $X(t)$ to $x$ before hitting $Z$, including $t=0$.
$g$ is a geometric random variable with mean $G=G_{Z}$. Let $\alpha=\frac{1}{2}\ln\left(4/3\right)$
and note that since there is a constant in \eqref{eq:E_gt_G} and
$G\ge1+\gamma^{4}$, as $P[X(2)=x|X(0)=x]\ge\gamma^{4}$, we can assume
that $G>2$. Thus \begin{eqnarray*}
P\left[g\ge\alpha G\right] & \ge & \left(1-G^{-1}\right)^{\alpha G}\\
 & \ge & \left(1-G^{-1}\right)^{2\alpha\left(G-1\right)}\\
 & \ge & e^{-2\alpha}=3/4\end{eqnarray*}
We further assume $G>\frac{2}{\alpha}\vee\frac{16}{k_{2}}$ so that
$\alpha G-1>\alpha G/2$ and $G\frac{k_{2}}{16}>1$. Let $A$ be the
event that there is an $1\le i\le\left(\alpha G-1\right)\wedge i^{*}$
such that $\rho_{i}>k_{1}\left(\frac{k_{2}}{16}G\right)^{2}/\log\left(\frac{k_{2}}{16}G\right)$.
Note that $i^{*}\le\alpha G-1$ implies $A$. Thus by \eqref{eq:excursion_len_low_bnd}
and the independence of consecutive excursions from $x$, \[
P\left[A^{c}\right]\le\left(1-\frac{16}{G}\right)^{\alpha G/2}\le e^{-8\alpha}\]
which is smaller than $1/4$.

Thus \[
P\left[\left\{ g\ge\alpha G\right\} ,A\right]\ge\frac{1}{2}.\]
This implies the lemma since $\tau_{Z}>\sum_{i=1}^{g-1}\rho_{i}$,
and for $k=k_{1}k_{2}^{2}/256$, $\sum_{i=1}^{g-1}\rho_{i}>kG^{2}/\log G$.
\end{proof}

\bibliographystyle{alea2}
\bibliography{07-07}

\end{document}